\theoremstyle{plain} 
\newtheorem{thm}{Theorem}
\newtheorem{prop}[thm]{Proposition}
\theoremstyle{definition}
\newtheorem{defn}{Definition}
\theoremstyle{remark}
\def\B{\mathbb{B}}
\newcommand{\Per}{\text{Per}}
\newcommand{\p}{\partial}
\newcommand{\n}{\nabla}
\newcommand{\CC}{\mathcal{C}}
\newcommand{\eps}{\varepsilon}
\newcommand{\red}[1]{\textcolor{red}{#1}}
\newcommand{\sauce}{\red{Source}}
\newcommand{\z}{\mathcal Z}
\newcommand{\vz}{\operatorname{VZ}}
\newcommand{\vc}{\operatorname{VC}}
\renewcommand{\B}{\mathcal B}
\newcommand{\M}{\mathbf M}
\begin{document}

\title{Mountain Pass Critical Points of the Volume Constrained Area Functional}
\date{\today}
\author{Gregory R. Chambers, Jared Marx-Kuo}
\maketitle

\begin{abstract}
We construct mountain pass critical points of the perimeter functional on sets of fixed volume. For a generic metric, this gives rise to a smooth almost embedded hypersurface with non-zero constant mean curvature. Our work utilizes recent techniques of Mazurwoski--Zhou \cite{mazurowski2024infinitely}, and a new result on the connectedness of Cacciopoli sets: any two smooth Cacciopoli sets can be connected by an $\mathbf{F}$-continuous map.
\end{abstract}

\section{Introduction}
Min-max is a recurring, powerful tool in differential geometry that has been used to find minimal surfaces, constant mean curvature surfaces, capillary surfaces, prescribed mean curvature surfaces, and the like. While we are unable to provide a holistic overview, we highlight the work of Almgren \cite{almgren1962homotopy, almgren1965theory}, Pitts \cite{pitts2014existence}, and Schoen--Simon \cite{schoen1981regularity} to find minimal surfaces, viewed as critical points of the area functional. The Almgren--Pitts program was revived by Marques--Neves in \cite{marques2014min, marques2017existence}, and lead to a series of works describing min-max hypersurfaces associated to the ``p-widths", $\{\omega_p\}$, as defined by Gromov \cite{gromov2006dimension}. We refer the reader to the introduction of Mazurowski--Zhou \cite{mazurowski2024infinitely} for more background, and we highlight the following celebrated result of min-max methods, known as ``Yau's conjecture":
\begin{theorem}[\cite{marques2017existence, irie2018density, chodosh2020minimal, song2023existence}]
Given $(M^{n+1}, g)$ a closed manifold and $3 \leq n+1 \leq 7$, there exists infinitely many distinct, smooth, minimal hypersurfaces.
\end{theorem}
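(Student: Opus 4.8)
The plan is to deduce this from the Almgren--Pitts min-max theory for the \emph{volume spectrum} together with the Weyl law of Liokumovich--Marques--Neves and a dichotomy on the ambient metric. First I would set up the $p$-widths $\{\omega_p(M,g)\}_{p\ge 1}$ as the min-max values obtained by sweeping out $M$ by $p$-dimensional families of mod-$2$ flat hypercycles in $\mathcal{Z}_n(M;\mathbb{Z}_2)$; by the work of Gromov and Guth these are finite and nondecreasing, and by Almgren--Pitts together with the Schoen--Simon regularity theorem each $\omega_p$ is realized as the $n$-dimensional mass of a closed, smooth, embedded minimal hypersurface $\Sigma_p$ (smoothness using $3\le n+1\le 7$), counted with integer multiplicity. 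The crucial quantitative input is the Weyl law $\omega_p(M,g)=a(n)\,\mathrm{Vol}(M,g)^{n/(n+1)}\,p^{1/(n+1)}+o(p^{1/(n+1)})$, which forces $\omega_p\to\infty$ and pins down the growth rate.

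Next I would argue by contradiction: suppose $(M,g)$ has only finitely many distinct closed minimal hypersurfaces $\Gamma_1,\dots,\Gamma_N$, so that every width is a finite nonnegative-integer combination $\omega_p=\sum_i m_i^{(p)}\,\mathrm{Area}(\Gamma_i)$. For a generic metric I would follow Irie--Marques--Neves: if $\bigcup_i\Gamma_i$ is not dense there is a nonempty open $U$ disjoint from all $\Gamma_i$, and after a bumpy perturbation one may assume the $\Gamma_i$ are nondegenerate and still avoid $U$; conformally enlarging the metric inside $U$ along a path $g_t$ leaves each $\mathrm{Area}(\Gamma_i)$ fixed while strictly increasing $\mathrm{Vol}(M,g_t)$. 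Using that $\omega_p$ is monotone under $g_1\le g_2$ and depends only on the metric in the region swept out, $\omega_p(g_t)$ would be eventually constant in $t$ for each $p$, contradicting the Weyl law for $g_1$ versus $g_0$. Hence minimal hypersurfaces are dense, in particular infinitely many, for a residual set of metrics.

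To pass from generic to all metrics in the stated dimension range I would bring in the two remaining ingredients. In the positive Ricci curvature case, the Frankel property (any two closed minimal hypersurfaces meet) together with a Lusternik--Schnirelmann argument shows that either $\omega_p<\omega_{p+1}$ strictly, which already yields infinitely many distinct hypersurfaces, or $\omega_p=\omega_{p+1}$ and the common realizing surface occurs with multiplicity or in a positive-dimensional family, which again produces infinitely many; this is Marques--Neves. Song's theorem then removes the curvature hypothesis by passing to the ``core'' of $(M,g)$ and proving a relative Weyl law and a cylindrical Frankel property there, reducing the general closed case to the two handled above. In dimension $n+1=3$ one can alternatively run the Allen--Cahn min-max of Gaspar--Guaraco, whose limit interfaces are minimal surfaces of multiplicity one by Chodosh--Mantoulidis's curvature estimates, and the same Weyl-law dichotomy gives infinitely many embedded minimal surfaces.

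The hard part, and the reason this required a sequence of papers rather than one, is the multiplicity problem: the raw min-max construction only outputs an integral varifold, so a priori all of the $\omega_p$ could be accounted for by integer multiples of a single minimal hypersurface, and the Weyl law by itself does not rule this out. Each regime resolves it by different means — bumpy-metric genericity plus the density argument, the Frankel property plus Lusternik--Schnirelmann, Song's core construction, and the multiplicity-one phenomenon in low dimensions. I expect that establishing the Weyl law and the regularity of min-max hypersurfaces up to the codimension-$7$ threshold, and then stitching the dichotomy together uniformly, is where essentially all of the difficulty lies.
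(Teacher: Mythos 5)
The paper does not prove this theorem at all --- it is quoted verbatim as background (``Yau's conjecture'') and attributed entirely to the cited works of Marques--Neves, Irie--Marques--Neves, Chodosh--Mantoulidis, and Song; no proof appears in the text, so there is no paper argument to compare yours against. That said, your sketch is a fair high-level roadmap of how those four papers combine, correctly assigning to each its role: the volume spectrum and Weyl law as quantitative input, Irie--Marques--Neves's density theorem for a residual set of metrics via the conformal perturbation and countable-area argument, Marques--Neves's Frankel/Lusternik--Schnirelmann dichotomy in positive Ricci curvature, Song's core construction and relative Weyl law to remove genericity, and Chodosh--Mantoulidis's multiplicity-one curvature estimates in dimension three. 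Two small cautions: the step ``$\omega_p(g_t)$ would be eventually constant'' in the density argument should really be phrased as the Lipschitz, monotone function $t\mapsto\omega_p(g_t)$ taking values in a fixed countable set and hence being constant on an interval, which then contradicts the Weyl law; and ``monotone under $g_1\le g_2$'' alone does not suffice --- the Lipschitz dependence of the widths on the metric is essential for that countability argument to bite. These are cosmetic, and your identification of the multiplicity problem as the structural difficulty uniting the four works is exactly right, but none of this constitutes engaging with a proof in the paper, since the paper supplies none.
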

Within min-max, one can focus on one-parameter mountain pass methods to construct minimal surfaces and their aforementioned variants (see e.g. \cite{bellettini2020inhomogeneous, mazurowski2024infinitely, marques2017existence, stern2021existence, zhou2018existence, dey2023existence} among many others). Viewing these distinguished surfaces as boundaries of sets, we develop a mountain pass theorem to show the existence of \textbf{volume constrained} critical points of the perimeter functional. \nl 
\indent One source of inspiration for our work lies in the recent results of Mazurowski--Zhou \cite{mazurowski2024infinitely} who construct critical points of the perimeter functional for sets of half volume.  Following the Marques--Neves program, Mazurowski--Zhou introduced the ``half-volume spectrum", $\{\tilde{\omega}_p\}$, which arises by considering sweepouts of the space of half-volume Caccioppoli sets. The $\Z_2$-invariance of this space allowed the authors to prove the following:
\begin{theorem}[Thm 1.1 \cite{mazurowski2024infinitely}] \label{mzInfinitelyManyThm}
Assume $M^{n+1}$ is a closed manifold of dimension $3\le n+1\le 5$. Let $g$ be a generic Riemannian metric on $M$. Then for each $p\in \N$ there exists an open set $\Omega_p\subset M$ with $\vol(\Omega_p) = \frac{1}{2}\vol(M)$ such that $\p \Omega_p$ is smooth and almost embedded, has non-zero constant mean curvature, and satisfies $\text{Area}(\p \Omega_p) = \tilde \omega_p(M)$.
\end{theorem}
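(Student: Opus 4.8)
The plan is to prove this result --- Theorem~1.1 of \cite{mazurowski2024infinitely} --- by running a $p$-parameter equivariant Almgren--Pitts min-max on the space of half-volume Caccioppoli sets, in the spirit of the Marques--Neves program \cite{marques2014min, marques2017existence}. First I would set up the \emph{half-volume spectrum}: let $\CC(M)_{1/2}$ be the space of Caccioppoli sets $\Omega\subset M$ with $\vol(\Omega)=\tfrac12\vol(M)$ in the $\mathbf{F}$-topology, and observe that the involution $\Omega\mapsto M\setminus\Omega$ acts freely on it. Hence every $\mathbf{F}$-continuous map $\Phi$ from a finite-dimensional cubical complex $X$ to $\CC(M)_{1/2}$ with no concentration of mass determines a class $\lambda(\Phi)\in H^1(X;\Z_2)$, obtained by pulling back the generator of $H^1(\mathbb{RP}^\infty;\Z_2)$ along the classifying map of the induced double cover; call $\Phi$ a \emph{$p$-sweepout} if $\lambda(\Phi)^p\neq 0$, and set $\tilde\omega_p(M)=\inf_{\Phi}\sup_{x\in X}\M(\partial\Phi(x))$ over all $p$-sweepouts. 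Standard constructions (slices of a Morse function, skeleta of fine triangulations) give $\tilde\omega_p<\infty$, and the cohomological bookkeeping of \cite{marques2014min} gives $\tilde\omega_p\le\tilde\omega_{p+1}$ and $\tilde\omega_p\to\infty$.

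Next I would establish the min-max existence and regularity in three parts. (i) A \emph{pull-tight} step produces a minimizing sequence of $p$-sweepouts whose widest slices converge, as varifolds, to an integral varifold $V$ that is stationary under all \emph{volume-preserving} deformations; equivalently $V$ has constant generalized mean curvature $c_p\in\RR$, the Lagrange multiplier. (ii) An \emph{almost-minimizing-in-annuli} step, carried out among competitors enclosing the same volume and compatible with the $\Z_2$ structure. (iii) A regularity step: combining the interior regularity of \cite{schoen1981regularity} with the min-max regularity theory for constant-mean-curvature hypersurfaces of Zhou--Zhu, one gets $V=\Sigma_p$ for a smooth closed almost-embedded hypersurface $\Sigma_p=\partial\Omega_p$ of constant mean curvature $c_p$ enclosing volume $\tfrac12\vol(M)$, and (once $c_p\neq 0$ is known, in which regime multiplicity is automatically one --- exactly why the constant-mean-curvature problem avoids the multiplicity-one difficulties of the minimal case) $\text{Area}(\Sigma_p)=\tilde\omega_p$. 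This part goes through for $n+1\le 7$; I expect the sharper hypothesis $n+1\le 5$ to be dictated by the genericity step.

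The crucial, and genuinely new, step is to show that for a generic metric $c_p\neq 0$ for every $p$. Take $g$ bumpy (White), so that every closed embedded minimal hypersurface is nondegenerate and, using the index and area bounds satisfied by min-max solutions together with compactness, the possible minimal $\Sigma_p$ lie in a fixed finite list on each area-bounded stratum. If $c_p=0$, then $\Sigma_p$ is (with multiplicities) a finite union of minimal hypersurfaces from that list, so $\Omega_p$ is, up to null sets, a union of the finitely many components of their complement; the set of volumes $\vol(\Omega)$ realizable in this way is finite. A generic perturbation of $g$ within bumpy metrics puts these finitely many volumes and $\tfrac12\vol(M)$ into general position, so that $\tfrac12\vol(M)$ avoids all of them; running this over all area-bounded strata and all $p$, and intersecting the countably many resulting residual conditions, yields a residual set of metrics on which $c_p\neq 0$ for all $p$. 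Making each of these pieces precise --- the finiteness and structure of minimal hypersurfaces bounding half-volume regions, and the embeddedness/multiplicity information needed to speak of ``$\Sigma_p$ separates $M$'' --- is where I expect the real work, and the dimensional restriction, to lie.

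Finally I would conclude infinitude by the Lusternik--Schnirelmann dichotomy of \cite{marques2014min}: since $\tilde\omega_p\to\infty$, either the $\tilde\omega_p$ take infinitely many values, whence the $\Sigma_p$ have infinitely many distinct areas and are pairwise distinct; or $\tilde\omega_p=\tilde\omega_{p+1}$ for infinitely many $p$, in which case the standard Lusternik--Schnirelmann argument forces the set of half-volume min-max constant-mean-curvature hypersurfaces at such a level to be infinite. Either way one obtains infinitely many distinct smooth, almost-embedded, nonzero-constant-mean-curvature hypersurfaces each bounding a region of volume $\tfrac12\vol(M)$, and relabeling gives $\partial\Omega_p$ as claimed. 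The main obstacle is the combination of Steps (ii)--(iii) --- porting the Almgren--Pitts and Zhou--Zhu machinery to the $\mathbf{F}$-topology on half-volume Caccioppoli sets so that the almost-minimizing property survives the volume constraint and the regularity theorem applies with the correct mean curvature and multiplicity --- together with the genericity argument of the third paragraph, which has no analogue in the unconstrained theory.
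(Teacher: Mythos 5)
This statement is Theorem~1.1 of Mazurowski--Zhou, which the present paper cites and does not reprove; the only ``proof'' the paper offers is its two-sentence sketch in the introduction, and that sketch describes an approach that differs from yours in a way that matters. You propose to run an equivariant Almgren--Pitts min-max \emph{directly} on $\CC(M)_{1/2}$, with a pull-tight step producing a varifold stationary under volume-preserving deformations, followed by an almost-minimizing and regularity argument adapted to the constrained space. The paper explicitly warns against exactly this: in Section~\ref{GMTBackground} it notes that ``a `pull--tight' procedure for varifolds arising as boundaries of Cacciopoli sets with constrained volume appears difficult to produce,'' and this is precisely why Mazurowski--Zhou avoid constrained min-max altogether. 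Their actual route is to define a \emph{penalized} functional $E_k(\Omega)=\mathbf{M}(\p\Omega)+k(|\Omega|-\tfrac12\vol(M))^2$ on the \emph{unconstrained} space $\CC(M)$, run min-max there (where ordinary pull-tight is available and Zhou--Zhu's prescribed-mean-curvature regularity theory applies via a second penalization $F^{h,f}$ with a Morse function $h$ to break degeneracy of the touching set), obtain a priori mean-curvature bounds on the critical hypersurfaces, and then send $k\to\infty$ to recover a half-volume CMC boundary in the limit. Your pieces (i)--(iii) therefore rest on an unconstrained-to-constrained transfer that has never been carried out and is the specific obstacle the penalization is designed to sidestep; as written they constitute a gap rather than a different but complete argument.

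The surrounding material in your proposal is closer to the mark: the definition of the half-volume spectrum via the free $\Z_2$ action $\Omega\mapsto M\setminus\Omega$, the $p$-sweepout cohomological bookkeeping, the Lusternik--Schnirelmann dichotomy, and the broad shape of the genericity argument for $c_p\neq 0$ (bumpy metric, finiteness of candidate minimal hypersurfaces on area-bounded strata, perturbing so that $\tfrac12\vol(M)$ avoids the finitely many volumes cut out by unions of components of their complement) are all in the right spirit. Two caveats: the dimensional restriction $3\le n+1\le 5$ in Mazurowski--Zhou is tied to curvature estimates and CMC compactness needed to pass to the $k\to\infty$ limit, not solely to the genericity step as you guess; and ``multiplicity is automatically one once $c_p\neq 0$'' is true in the Zhou--Zhu prescribed-mean-curvature framework but needs to be invoked at the level of the penalized $F^{h,f}$-min-max, not asserted after the fact for a hypothetical constrained min-max limit. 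If you want to repair your outline without adopting the penalization wholesale, the honest fix is to replace your step (i) with the $E_k$/$F^{h,f}$ machinery and then carry your step (iii) and the genericity paragraph essentially as written.
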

The authors do this by considering a modified functional, $E_k$, which acts on the space of all Caccioppoli sets, $\CC(M)$, but penalizes volumes far from $\text{Vol}_g(M)/2$. This allows the authors to regularize and get a priori bounds on the mean curvature of the boundary of critical points. Increasing the penalty ($k \to \infty$) and taking appropriate limits allows them to recover a smooth, almost embedded CMC surface bounding half the volume of the manifold. \nl 
%
%
\indent We prove an adaptation of Theorem \ref{mzInfinitelyManyThm} to the one parameter setting in Theorem \ref{MPassThm}. Suppose $\Omega_1, \Omega_2$ are of the same volume, $V_0$, so that $\partial \Omega_i$ are constant mean curvature and strictly stable with respect to volume preserving deformations. By considering the collection of all paths from $\Omega_1 \to \Omega_2$, the perimeter necessarily increases due to strict stability, and hence there should exist a classical mountain pass by minimizing (over all paths) the maximum of the perimeter of all sets along each path. This would yield another set $\Omega$ of volume $V_0$ and constant mean curvature. \nl 
%
%
\indent Our result is analogous to the following mountain pass construction of  De Lellis--Ramic \cite[Cor 1.9]{de2018min}:
%
%
\begin{theorem}[\cite{de2018min}]
Let $(M^{n+1}, \p M, g)$ a smooth manifold with boundary satisfying mild assumptions and $\gamma^{n-1} \subseteq \p M$ a smooth hypersurface in the boundary. Suppose that 
\begin{enumerate}
    \item[(i)] There are two distinct smooth, oriented minimal embedded hypersurfaces, $\Sigma_0$ and $\Sigma_1$ with $\p \Sigma_0 = \p \Sigma_1 = \gamma$ which are strictly stable, meet only at the boundary and bound some open domain $A$
\end{enumerate}
Then there exists a third distinct embedded minimal hypersurface, $\Gamma$, with $\p \Gamma = \gamma$.
\end{theorem}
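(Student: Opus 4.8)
The plan is to run a one-parameter (mountain pass) min--max in the space of hypersurfaces with fixed boundary $\gamma$, using $\Sigma_0$ and $\Sigma_1$ as the two ``valleys'' and the domain $A$ they bound to produce sweepouts. I would phrase competitors as relative boundaries of Caccioppoli sets in $M$ (equivalently, integral $n$-currents $T$ with $\p T = [\![\gamma]\!]$ relative to $\p M$), which automatically carries $\gamma$ along any continuous family. Since $\Sigma_0\cup\Sigma_1 = \p A$, foliating $A$ produces at least one continuous path $t\mapsto E_t$, $t\in[0,1]$, of such sets with $\p E_0\cap \mathrm{int}\,M = \Sigma_0$ and $\p E_1\cap \mathrm{int}\,M = \Sigma_1$; let $\Pi$ be its homotopy class and set
\[
L := \inf_{\{E_t\}\in\Pi}\ \sup_{t\in[0,1]}\ \mathcal H^n\big(\p E_t \cap \mathrm{int}\,M\big).
\]

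\textbf{The mountain-pass gap.} The crux is the strict inequality $L > \max\{\mathcal H^n(\Sigma_0),\mathcal H^n(\Sigma_1)\}$, and this is exactly where strict stability enters. Positivity of the lowest Dirichlet eigenvalue of the Jacobi operator of $\Sigma_i$ gives, by a standard graphical-calculus argument, a neighborhood $U_i$ of $\Sigma_i$ and a $\delta>0$ such that any hypersurface-with-boundary-$\gamma$ contained in $U_i$ and distinct from $\Sigma_i$ has strictly larger area, and --- the point that matters --- any such hypersurface lying on the ``inner boundary'' $\p U_i$ has area at least $\mathcal H^n(\Sigma_i)+\delta$. Shrinking so that $U_0\cap U_1 = \emptyset$ (possible since $\Sigma_0\neq\Sigma_1$), every sweepout in $\Pi$ must cross $\p U_0$ at its first exit time from $U_0$; taking the labeling with $\mathcal H^n(\Sigma_0) = \max\{\mathcal H^n(\Sigma_0),\mathcal H^n(\Sigma_1)\}$, this forces $\sup_t \mathcal H^n \geq \mathcal H^n(\Sigma_0)+\delta$, hence $L \geq \max\{\cdots\}+\delta$. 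I expect this to be the main obstacle: making ``neighborhood of $\Sigma_i$'' and ``first exit time'' meaningful in the weak (flat/varifold) topology in which the min--max is run and in which area is only lower semicontinuous is delicate, and must be done with a suitable $C^1$- or varifold-type distance near $\Sigma_i$ together with an $\eps$-regularity and compactness input for minimal surfaces of bounded area; the fixed-boundary condition along $\p M$ adds a further technical layer.

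\textbf{Min--max and conclusion.} Given the gap, I would invoke the existence-and-regularity theorem for min--max minimal hypersurfaces with prescribed boundary (the companion result of \cite{de2018min}, i.e.\ the Almgren--Pitts scheme adapted to this boundary setting): along an almost-minimizing min--max sequence one extracts a varifold limit $V = \sum_i c_i\Gamma_i$ with each $\Gamma_i$ smooth, embedded, minimal, $\mathrm{Sing}(\Gamma_i)\cap\p M = \emptyset$, $\sum_i \p\Gamma_i = \gamma$, $c_i = 1$ whenever $\p\Gamma_i \neq \emptyset$, and total mass $L$. Under the ambient ``mild assumptions'' on $M$ (which preclude closed minimal hypersurfaces, so every $\Gamma_i$ meets $\p M$, whereupon the boundary-current identity forces a single boundary-carrying component of multiplicity one, at least when $\gamma$ is connected), $V$ is a unit-multiplicity smooth embedded minimal hypersurface $\Gamma$ with $\p\Gamma = \gamma$. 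Finally, since $\mathcal H^n(\Gamma) = L > \max\{\mathcal H^n(\Sigma_0),\mathcal H^n(\Sigma_1)\}$ while $\Sigma_0$ and $\Sigma_1$ each have area $\leq \max\{\cdots\} < L$, $\Gamma$ cannot coincide with $\Sigma_0$ or $\Sigma_1$ (if it did, the absence of closed minimal components would force $V = \Sigma_i$ and hence $L = \mathcal H^n(\Sigma_i)$, a contradiction), so $\Gamma$ is the desired third embedded minimal hypersurface with $\p\Gamma = \gamma$.
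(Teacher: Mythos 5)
This statement is quoted from De Lellis--Ramic \cite[Cor.\ 1.9]{de2018min} as a motivating analogue of Theorem \ref{MPassThm}; the present paper does not supply a proof of it, only remarks that the intuition is the same as that of Theorem \ref{MPassThm} and that De Lellis--Ramic ``go to great length to establish regularity at the boundary.'' That caveat aside, your outline is a faithful account of the strategy of \cite{de2018min}, and it mirrors the structure of the paper's own proof of Theorem \ref{MPassThm}: phrase competitors as (relative) boundaries of Caccioppoli sets carrying $\gamma$; produce a nontrivial sweepout by foliating the enclosed region $A$; use strict stability of the two endpoints to force the mountain-pass gap $L > \max\{\mathcal H^n(\Sigma_0), \mathcal H^n(\Sigma_1)\}$; and then invoke the boundary min-max existence-and-regularity theorem to realize $L$ by a smooth embedded minimal hypersurface. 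You also correctly flag the two genuine difficulties --- making the gap quantitative in a weak (flat/varifold) topology, and the boundary regularity --- which are exactly the points the paper emphasizes.

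One place your sketch is a little soft: a ``standard graphical-calculus argument'' from positivity of the Jacobi eigenvalue does not by itself produce a uniform $\delta > 0$ lower bound on area along the boundary of a neighborhood of $\Sigma_i$ measured in the flat topology in which the sweepout is continuous; area is only lower semicontinuous there, and a competitor near $\p U_i$ need not be a graph over $\Sigma_i$. What is actually needed is a \emph{quantitative} strict stability estimate. In the paper's parallel fixed-volume setting this role is played by the Chodosh--Engelstein--Spolaor inequality (Lemma \ref{QuantIso}), which gives $\operatorname{Per}(E) - \operatorname{Per}(\Omega) \geq C_0\,|E\Delta\Omega|^2$ for flat-close sets of the same volume, and that inequality is then run through Proposition \ref{fixedVolMPIneq} to produce the gap $\tau > 0$. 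You would need the analogous quantitative estimate in the Plateau setting, adapted to the fixed boundary condition along $\p M$; this is part of what De Lellis--Ramic develop.
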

\noindent In fact, De Lellis--Ramic concieve of $\Sigma_0, \Sigma_1$ as local minimizers of the plateau problem, in the same way that we can apply Theorem \ref{MPassThm} when  $\Omega_1, \Omega_2$ are local minimizers of the isoperimetric inequality for a fixed volume $V_0$. The intuition of their theorem is the same as \ref{MPassThm}, however the authors go to great length to establish regularity at the boundary. The analogous regularity theory for Theorem \ref{MPassThm} was mostly developed by Mazurowski--Zhou in \cite{mazurowski2024infinitely}. \nl 
\indent We note that our mountain pass theorem \textit{assumes} that there exists a non-trivial path from $\Omega_1 \to \Omega_2$. This raises the following questions
\begin{enumerate}
\item Is the space of all Cacciopoli sets path connected? With respect to what norms? 
\item Is the space of Cacciopoli sets with \textit{fixed volume} path connected?
\end{enumerate}
To the authors knowledge this topological investigation of $\CC(M)$ has not been discussed prior, and we give partial answers to the above in the setting of Cacciopoli sets with smooth boundary in Theorem \ref{pathConnectedThm}. We hope to address the non-smooth case in future work.

\subsection{Statement of Main Results}

\subsubsection{Connectedness of $\CC^{\infty}(M)$}
Let $\CC^{\infty}(M)$ denote the space of Cacciopoli sets with smooth boundary and $\CC_{V_0}^{\infty}(M)$ the space of such sets with a fixed volume, $V_0$.
\begin{theorem} \label{pathConnectedThm}
Let $(M^{n+1}, g)$ be a closed manifold (of any dimension $n+1 \geq 2$). Then for any $A, B \in \CC^{\infty}(M)$ (respectively $\CC^{\infty}_{V_0}(M)$), there exists a path 
\begin{align*}
\sigma&: [0,1] \to (\CC(M), \; \mathbf{F}) \\
\sigma(0) &= A, \quad \sigma(1) = B
\end{align*}
when $A, B \in \CC^{\infty}_{V_0}(M)$, we further have that $|\sigma(t)| = V_0$ for all $t \in [0,1]$.
\end{theorem}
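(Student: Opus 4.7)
The plan is to produce the required paths using sublevel-set families of smooth Morse functions: for the unconstrained case by contracting each set to $\emptyset$, and for the volume-constrained case by a simultaneous shrink--grow construction along two chosen functions.

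\emph{Unconstrained case.} Given $A \in \CC^{\infty}(M)$, since $\p A$ is a smooth hypersurface one picks a smooth Morse function $u : M \to \RR$ with $A = \{u \leq 0\}$, $0$ a regular value, and $\min u = -1$. The family $\{u \leq s\}$ for $s \in [-1, 0]$ connects $\emptyset$ to $A$ in $(\CC(M), \mathbf{F})$: flat continuity is immediate from the coarea formula, while varifold continuity of $|\{u = s\}|$ holds at regular values and passes through the isolated Morse critical values by standard slicing arguments (the varifold mass stays continuous even as topology changes). Concatenating the analogous path for $B$ gives $A \leadsto \emptyset \leadsto B$.

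\emph{Volume-constrained case.} Assume $|A| = |B| = V_0 \in (0, |M|)$. Fix an auxiliary Morse function $h : M \to \RR$, chosen generically so that $\{h = s\}$ is transverse to $\{u = t\}$ (where $u$ is the Morse defining function of $A$) and likewise for $B$'s defining function. Let $c_0$ be the unique value with $|\{h \geq c_0\}| = V_0$, and set $R := \{h \geq c_0\}$. With $A_t := \{u \leq t\}$ for $t \in [-1, 0]$, define $s(t)$ implicitly by
\[
|\{h \geq s(t)\} \setminus A_t| = V_0 - |A_t|,
\]
which has a unique continuous solution by monotonicity of the left-hand side in $s$ together with the generic transversality. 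Setting
\[
\sigma_t := A_t \cup \{h \geq s(t)\}, \quad t \in [-1, 0],
\]
one obtains $|\sigma_t| = V_0$ throughout, with $\sigma_0 = A$ (since $\{h \geq s(0)\} \subset A$ modulo measure zero) and $\sigma_{-1} = R$. The same construction produces a volume-preserving path $B \leadsto R$, and concatenation yields the desired $A \leadsto R \leadsto B$.

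\emph{Main obstacle.} The principal technical step is verifying $\mathbf{F}$-continuity of $\sigma_t$ across the isolated critical values of $u$ and $h$, where $\p \sigma_t$ becomes singular. Uniform perimeter bounds follow from the inclusion $\p \sigma_t \subset \{u = t\} \cup \{h = s(t)\}$, and varifold continuity of these Morse level-set families is standard; combining these via slicing gives $\mathbf{F}$-continuity of $\sigma_t$. Continuity of $s(t)$ is handled by the implicit function theorem away from critical values of $h$, and by a direct monotonicity-and-squeeze argument at the finitely many exceptional levels. Once these are verified, $\sigma_t$ is the claimed $\mathbf{F}$-continuous, volume-preserving path.
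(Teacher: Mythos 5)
Your unconstrained case matches the paper's (Lemma~\ref{contractLemma}: Morse sublevel sets), but your volume-constrained construction is genuinely different: the paper arranges $\partial A \pitchfork \partial B$, then carves $A\setminus B$ away from $A\cap B$ and runs a shrink/grow exchange through the intersection, whereas you route both $A$ and $B$ through a common reference set $R=\{h\ge c_0\}$ via the two-parameter family $\sigma_t = A_t\cup\{h\ge s(t)\}$. The idea is attractive, but as written it has a genuine gap exactly at the step the paper spends most of its effort on. The boundary of the union is not the union of the boundaries: $\partial\sigma_t$ consists of $\partial A_t$ with the portion inside $\{h> s(t)\}$ deleted, together with $\{h=s(t)\}$ with the portion inside $A_t$ deleted. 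So $\mathbf{F}$-continuity of $\sigma_t$ does not follow from $\mathbf{F}$-continuity of the two level-set families separately; you must show that the \emph{excised} pieces vary continuously in mass, which requires that $\mathcal{H}^n\big(\{u=t\}\cap\{h=s(t)\}\big)=0$ and that the two hypersurfaces meet transversally along the whole path. Your genericity claim cannot deliver this: the pairs $(t,s)$ at which $\{u=t\}$ and $\{h=s\}$ fail to be transverse are the critical values of the map $(u,h):M\to\RR^2$, generically a curve in the $(t,s)$-plane, which the curve $t\mapsto(t,s(t))$ will cross. At such crossings (and wherever the two hypersurfaces share a positive-$\mathcal{H}^n$-measure piece) you need a separate argument that no boundary mass appears or disappears discontinuously; this is precisely the failure mode illustrated by the paper's annulus example, and it is the content of the paper's Carving Lemma (Lemma~\ref{lem:carving}), which you have replaced with the phrase ``standard slicing arguments.''

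A second, related gap is the continuity of $s(t)$. The function $s\mapsto\lvert\{h\ge s\}\setminus A_t\rvert$ is continuous and non-increasing but not strictly decreasing: it is constant on any interval of $s$-values for which the swept region $\{s\le h<s'\}$ lies inside $A_t$. Hence the solution set is an interval, $s(t)$ need not be unique, and it can jump as $t$ varies; neither monotonicity nor the implicit function theorem gives a continuous selection. The set $\sigma_t$ is unaffected by moving $s$ within such an interval (the symmetric difference is buried in $A_t$), but the two one-sided limits of the boundary varifold $\lvert\partial\sigma_t\rvert$ involve \emph{different} level sets of $h$, and showing their exposed parts $\{h=s_\pm\}\setminus A_t$ carry the same mass again reduces to a transversality/measure-zero claim you have not established. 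In short: the skeleton of an alternative proof is here, but the $\mathbf{F}$-continuity of the union across non-transverse parameters and across jumps of $s(t)$ — the actual crux of the theorem — is asserted rather than proved.
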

\indent We emphasize the usage of the $\mathbf{F}$ (``Bold-F") norm in Theorem \ref{pathConnectedThm}, which is typically used in min-max constructions (see e.g.\cite{li2019introduction, marques2020applications}) in order to keep track of the varifold induced by the boundary, $|\partial \Omega|$, as well as the set itself via continuity of $\chi_{\Omega}$ in $L^1(M)$, also known as the $\mathcal{F}$ (``Flat") norm. See \S \ref{GMTBackground} for the formal definitions.  \nl 
\indent We note that if we restrict to the weaker $\mathcal{F}$-norm, then Theorem \ref{pathConnectedThm} holds easily. With no volume constraint, one can simply cover any $A \in \CC(M)$ with a finite union of balls and continuously removing each ball to a point, creating a path between $A \to \emptyset$. In the fixed volume setting, one can also create a path between $A, B \in \CC_{V_0}^{\infty}(M)$ by covering the two sets with small balls and removing balls from $A \backslash B$ while including balls from $B \backslash A$. However, removing small balls may not be continuous in the $\mathbf{F}$-norm if the balls are chosen poorly. As an example, consider excising $B_1(0)$ via the family of annuli $\sigma(t) = B_1(0) \backslash B_{1-t}(0)$. As $t \to 1$, $\mathcal{F}(\sigma(t), \emptyset) \to 0$ but $\mathbf{F}(\sigma(t), \emptyset) \to 2$ (see Figure \ref{fig:varifoldex}).
\begin{figure}[h!]
\centering
\includegraphics[scale=0.5]{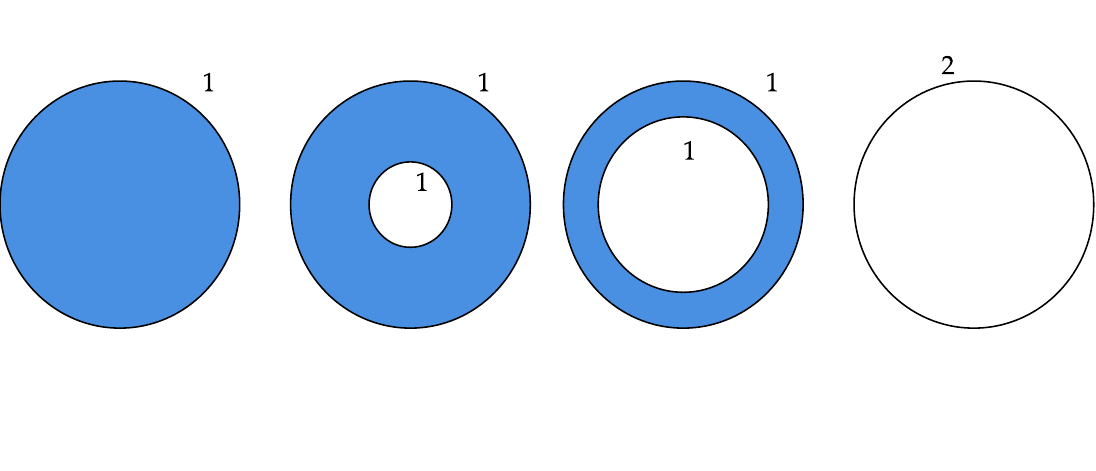}
\caption{Continuity in $\mathcal{F}$ does not imply continuity in $\mathbf{F}$.}
\label{fig:varifoldex}
\end{figure}
We also note that Theorem \ref{pathConnectedThm} can also be generalized to ambient manifolds which are non-compact, assuming we restrict to smooth, precompact Cacciopoli sets. \nl 
\indent Finally, we remark that the above statement is different than showing that the space of all smooth Cacciopoli sets is path connected with respect to the $\mathbf{F}$ norm - indeed this is manifestly false if the topology of the sets is different. We emphasize that for any $A, B \in \CC^{\infty}(M)$ (or $\CC^{\infty}_{V_0}(M)$) we are able to produce $\sigma: [0,1] \to \CC(M, \mathbf{F})$ such that $\sigma(0) = A$ and $\sigma(1) = B$. \newline 
\indent The result for $\CC^{\infty}(M)$ follows easily from choosing a Morse function on $\Omega \in \CC^{\infty}(M)$ with $f:\Omega \to \R$, $f^{-1}(1) = \partial \Omega$ and $f^{-1}(0) = p$ a point. As a quick corollary of Theorem \ref{pathConnectedThm}, we have that any $\Omega \in \CC^{\infty}(M)$ can be contracted to the empty set in an $\mathbf{F}$- continuous manner. 
However, the result for $\CC^{\infty}_{V_0}(M)$ is more delicate, as if we consider $A, B \in \CC_{V_0}^{\infty}(M)$ with $A \cap B \neq \emptyset$, then it is unclear how to use a Morse function to deform $A \to B$ while preserving total volume. 
%
%
%
%
%
\subsubsection{Perimeter Mountain Pass}
Inspired by \cite{de2018min} and \cite{mazurowski2024infinitely}, we prove the following:
\begin{theorem}\label{MPassThm}
Let $(M^{n+1}, g)$ be a closed manifold with $3 \leq n + 1 \leq 5$ and generic metric. Suppose that $\Omega_1, \Omega_2 \in \CC(M)$ are distinct sets with the same volume, $h_0$, with smooth boundary. Further assume that $\mathbf{M}(\p \Omega_1) \geq \mathbf{M}(\p \Omega_2)$ and $\p \Omega_1$ is a strictly stable critical point for the area functional under smooth volume preserving deformations. \nl 
\indent There exists $\Omega \in \CC(M)$ with $\text{Vol}(\Omega) = h_0$ and $\p \Omega$, a critical point of the (volume constrained) area functional, with $A(\p \Omega) > A(\p \Omega_1)$. $\p \Omega$ has constant mean curvature, $c \neq 0$, and is smooth and almost-embedded. 
\end{theorem}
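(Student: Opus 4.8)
The plan is to re-run the Mazurowski--Zhou construction \cite{mazurowski2024infinitely} in a one-parameter mountain-pass setting: Theorem \ref{pathConnectedThm} provides the sweepout, strict stability of $\p\Omega_1$ provides the mountain-pass inequality, and the regularization, regularity theory, and $k\to\infty$ limit are imported from \cite{mazurowski2024infinitely} with $(X,Z)$-homotopy classes of paths replacing the $\Z_2$-invariant sweepouts used there. First I would invoke Theorem \ref{pathConnectedThm} with $V_0=h_0$ to obtain an $\mathbf F$-continuous path $\sigma\colon[0,1]\to(\CC(M),\mathbf F)$ with $\sigma(0)=\Omega_1$, $\sigma(1)=\Omega_2$ and $|\sigma(t)|=h_0$ for every $t$; let $\Pi$ be its $(X,Z)$-homotopy class with $X=[0,1]$, $Z=\{0,1\}$. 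Then, following \cite{mazurowski2024infinitely}, introduce regularized functionals $E_k(\Omega)=\mathbf M(\p\Omega)+\Psi_k(\vol(\Omega))$ penalizing volumes away from $h_0$, where $\Psi_k\ge 0$, $\Psi_k(h_0)=0$, and $\Psi_k\to+\infty$ off $h_0$ (and, as there, pass to a further prescribed-curvature functional with a generic Morse function to unlock the regularity theory, sending that term to $0$ in the limit). The crucial bookkeeping: since $\sigma$ has constant volume $h_0$, $\sup_t E_k(\sigma(t))=\sup_t\mathbf M(\p\sigma(t))=:\Lambda$ is finite and \emph{independent of $k$}, so the widths $L_k:=L^{E_k}(\Pi)$ satisfy $L_k\le\Lambda$ for all $k$; moreover $\sup_{z\in Z}E_k(\sigma(z))=\max\bigl(\mathbf M(\p\Omega_1),\mathbf M(\p\Omega_2)\bigr)=\mathbf M(\p\Omega_1)$ by the hypothesis $\mathbf M(\p\Omega_1)\ge\mathbf M(\p\Omega_2)$.

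The heart of the matter is the \emph{strict} mountain-pass inequality, which I would establish with a $k$-uniform gap: $L_k\ge\mathbf M(\p\Omega_1)+\delta$ for some $\delta>0$ and all large $k$. Because $\p\Omega_1$ is strictly stable under volume-preserving deformations, the theorem of Morgan--Ros (a strictly stable CMC hypersurface locally minimizes perimeter at fixed volume in the $L^1$ topology) shows $\Omega_1$ is the unique perimeter minimizer among volume-$h_0$ sets in some $L^1$-ball about it. A standard compactness argument ($BV$-compactness together with lower semicontinuity of perimeter) makes this quantitative on an annulus, and a volume-adjustment lemma extends it slightly off the volume constraint: there are $\rho,\eta,\delta_1>0$ with $\rho<\|\chi_{\Omega_1}-\chi_{\Omega_2}\|_{L^1}$ such that $\mathbf M(\p\Omega)\ge\mathbf M(\p\Omega_1)+\delta_1$ whenever $\|\chi_\Omega-\chi_{\Omega_1}\|_{L^1}\in[\rho/2,\rho]$ and $|\vol(\Omega)-h_0|\le\eta$; for $|\vol(\Omega)-h_0|>\eta$ one uses instead that $\Psi_k\to+\infty$ there. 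Hence, for $k$ large, $E_k\ge\mathbf M(\p\Omega_1)+\delta$ on the $L^1$-level set $S_\rho:=\{\Omega:\|\chi_\Omega-\chi_{\Omega_1}\|_{L^1}=\rho\}$, with $\delta:=\tfrac12\min(\delta_1,1)$. Now any competitor $\Phi\in\Pi$ of finite energy has uniformly bounded perimeter, hence is $L^1$-continuous; since $\Phi(0)=\Omega_1$ lies inside $S_\rho$ and $\Phi(1)=\Omega_2$ lies outside it, $\Phi$ must cross $S_\rho$, so $\sup_t E_k(\Phi(t))\ge\mathbf M(\p\Omega_1)+\delta$. Taking the infimum over $\Phi$ gives $L_k\ge\mathbf M(\p\Omega_1)+\delta$, in particular strictly above $\sup_Z E_k\circ\sigma$.

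With the strict inequality in hand, the Mazurowski--Zhou min-max theorem for the penalized prescribed-curvature functional \cite{mazurowski2024infinitely} yields, for each large $k$, a smooth almost-embedded hypersurface $\Sigma_k=\p\Omega_k$ with $E_k(\Omega_k)=L_k$, constant mean curvature $H_k$, and touching set in a countable union of $(n-1)$-dimensional manifolds. From $L_k\le\Lambda$ and the choice of $\Psi_k$ one gets $\vol(\Omega_k)\to h_0$ and $|H_k|$ uniformly bounded; then, since $\mathbf M(\Sigma_k)\le\Lambda$, the compactness theory for almost-embedded CMC hypersurfaces of bounded area and bounded mean curvature for a generic metric in dimension $3\le n+1\le5$ (developed in \cite{mazurowski2024infinitely}) produces a subsequential limit $\Sigma=\p\Omega$ which is smooth, almost-embedded, has $\vol(\Omega)=h_0$, and has constant mean curvature $c=\lim_k H_k$; being a smooth CMC boundary of a volume-$h_0$ region, $\Sigma$ is a critical point of the volume-constrained area functional. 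No mass is lost in the limit (again by the Mazurowski--Zhou regularity package), so $A(\Sigma)=\lim_k L_k\ge\mathbf M(\p\Omega_1)+\delta>A(\p\Omega_1)$. Finally $c\ne 0$, which is excluded exactly as in \cite{mazurowski2024infinitely}: a minimal limit attained at the mountain-pass level and bounding a volume-$h_0$ region can be ruled out by a volume-deformation argument against the minimality of $L_k$.

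I expect the genuine obstacle to be the second paragraph: promoting the \emph{infinitesimal} strict-stability hypothesis to a \emph{$k$-uniform} mountain-pass gap in the weak $\mathbf F$-topology. This is precisely why one must pass to $L^1$-neighborhoods (so that $BV$-compactness and continuity of the $L^1$-distance are available), invoke Morgan--Ros local minimality, and check that the gap survives both the volume adjustment and the $k\to\infty$ limit; the $\mathbf F$-continuity of competitors is then used only through the resulting $L^1$-continuity. The remaining subtleties — absence of mass loss and $c\ne0$ — are handled by importing the corresponding parts of the Mazurowski--Zhou analysis, which is where the dimension restriction $3\le n+1\le5$ and the genericity of the metric are needed.
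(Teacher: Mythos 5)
Your proposal follows the same overall architecture as the paper — build the sweepout from Theorem~\ref{pathConnectedThm}, penalize the volume à la Mazurowski--Zhou, apply their min-max Theorem~\ref{F-min-max} for each $k$, and send $k\to\infty$ — but it reaches the central $k$-uniform mountain-pass gap by a genuinely different route, and the comparison is worth spelling out. The paper establishes the strict inequality in two stages: Proposition~\ref{fixedVolMPIneq} proves a gap for the \emph{fixed-volume} homotopy class $\Pi_{V_0}$ using the Chodosh--Engelstein--Spolaor quantitative strict stability bound $\Per(E)-\Per(\Omega_1)\ge C_0\,|E\,\Delta\,\Omega_1|^2$ (Lemma~\ref{QuantIso}), and then Proposition~\ref{limitLVals} transfers the gap to the \emph{free-volume} class $\Pi$ by showing $L^k(\Pi)\to L_0(\Pi_{V_0})$, which in turn requires the Mazurowski--Zhou deformation retract (Lemma~\ref{deformLem}) together with Zhou's discretization/interpolation theorems with boundary conditions. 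Your argument instead works directly on the free-volume class $\Pi$: you place an $L^1$-sphere $S_\rho$ between $\Omega_1$ and $\Omega_2$, use Morgan--Ros local minimality plus $BV$-compactness and lower semicontinuity to get a qualitative perimeter excess $\delta_1$ on an annulus around $\Omega_1$ among nearly-volume-$h_0$ sets, and observe that any finite-energy competitor in $\Pi$ must cross $S_\rho$ (using only $\mathcal F$-continuity, which is automatic) either in the thin-volume band (where the excess $\delta_1$ applies) or outside it (where $\Psi_k\to\infty$). This sidesteps Proposition~\ref{limitLVals} and the associated deformation-retract-plus-interpolation machinery entirely, which is a genuine simplification of the competitor-transfer step. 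What you pay for this is that the compactness argument needs $\Omega_1$ to be a \emph{strict} local $L^1$-minimizer at fixed volume (unique minimizer in a ball), which is more than the qualitative statement ``locally minimizes perimeter'' usually attributed to Morgan--Ros; it is precisely what the quantitative CES estimate delivers for free, and this is why the paper uses Lemma~\ref{QuantIso} — the gap on the sphere $\{\mathcal F(\cdot,\Omega_1)=\eta\}$ is immediate from $C_0\eta^2$, with no compactness argument and no uniqueness claim required. (The paper's remark citing Morgan--Ros and Inauen--Marchese as ``similar work'' suggests it was aware of your alternative.) A smaller bookkeeping difference: the paper carries the prescribed-curvature term $-\eps\int_\Omega h$ through the gap argument from the start (Proposition~\ref{fixedVolMPIneq} is stated for $F_{\eps h}$ with $\eps$ small), whereas you only sketch that it can be added and sent to zero; both are fine. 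The final steps — the $k\to\infty$ (and $\eps\to0$) limit, absence of mass loss, the CMC compactness package, and ruling out $c=0$ — are deferred to Mazurowski--Zhou's \S 6.5 and Appendix B by both proofs.
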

\noindent We sketch the theorem in a few steps
\begin{enumerate}
\item We show the existence of a non-empty homotopy class of paths, $\gamma: [0,1] \to \CC(M)$ with $\gamma(0)= \Omega_1$, $\gamma(1) = \Omega_2$, and $|\gamma(t)| = h_0$ for all $t$. This uses Theorem \ref{pathConnectedThm}.
\item We define an $L$-value for said homotopy class.
\item We use strict stability and quantitative bounds on perimeter (cf. \cite{chodosh2022riemannian, inauen2018quantitative}) to show that $L > \max(P(\Omega_1), P(\Omega_2))$. 
\item Adapting the work of Mazurowski--Zhou \cite{mazurowski2024infinitely}, we define an $L^k$-value for the homotopy class of all paths, $\gamma: [0,1] \to \CC(M)$ with $\gamma(0)= \Omega_1$, $\gamma(1) = \Omega_2$, where $L^k$ corresponds to the perimeter functional plus a volume penalty.
\item Sending $k \to \infty$, we show that $L^k \to L$, and use the regularity theory of Mazurowski--Zhou to show the existence of a fixed volume, CMC hypersurface which achieves the $L$-value. 
\end{enumerate}
It would be interesting to study the index of $\partial \Omega$ with respect to volume preserving deformations. Intuitively, the index should be $1$, following the program of Marques--Neves (see e.g. \cite{marques2021morse}). However, the difficulty of doing min-max over a space of Cacciopoli sets with fixed volume prevents a direct index upper bound.

\section{Preliminaries}
\subsection{Geometric Measure Theory} \label{GMTBackground}
In this section, we recall concepts from geometric measure theory needed in the paper, following the conventions from Mazurowski--Zhou \cite{mazurowski2024infinitely}. See \cite{simon1984lectures} for more detail. Let $(M^{n+1},g)$ be a closed Riemannian manifold. 
\begin{itemize}
\item Let $\CC(M)$ denote the space of all Caccioppoli sets in $M$. For $\Omega \in \CC(M)$, let $|\Omega| = \text{Vol}(\Omega)$.
\item Let $\CC^{\infty}(M)$ denote the space of all Cacciopoli sets with smooth boundary.
\item For $h_0 \in (0, \text{Vol}(M))$, let $\CC_{h_0}(M)$ denote the space of all $\Omega \in \CC(M)$ with $\text{Vol}(\Omega) = h_0$ and $\CC_{h_0}^{\infty}(M)$ the space of such sets with smooth boundary.

\item Let $\mathcal V(M)$ denote the space of all $n$-dimensional varifolds on $M$. 
\item Let $\z(M,\Z_2)$ denote the space of $n$-dimensional flat cycles in $M$ mod 2. 
\item Given $\Omega \in \CC(M)$, the notation $\p \Omega$ denotes the element of $\z(M,\Z_2)$ induced by the boundary of $\Omega$. 
\item For $\Omega \in \CC(M)$, $\text{Per}(\Omega)$ denotes the perimeter.
\item Given $T \in \z(M,\Z_2)$, the notation $\vert T\vert$ stands for the varifold induced by $T$. 
\item Let $\B(M,\Z_2)$ denote the space of all $T\in \z(M,\Z_2)$ such that $T = \p \Omega$ for some $\Omega \in \CC(M)$. 
\item Let $\z_{h_0}(M,\Z_2)$ denote the space of all $T\in \z(M,\Z_2)$ such that $T = \p \Omega$ for some $\Omega \in \CC_{h_0}(M)$.
\item We use $\mathcal F$ to denote the flat topology, $\mathbf F$ to denote the $\mathbf F$-topology, and $\mathbf M$ to denote the mass topology.  By convention, the $\mathcal F$ norm on $\CC(M)$ is given by 
\[
\mathcal F(\Omega_1, \Omega_2) = |\Omega_1 \Delta \Omega_2| = ||\chi_{\Omega_1} - \chi_{\Omega_2}||_{L^1(M)}
\]
and the $\mathbf F$ norm on $\CC(M)$ is given by
\[
\mathbf F(\Omega_1,\Omega_2) = \mathcal F(\Omega_1,\Omega_2) + \mathbf F(\vert \p \Omega_1\vert,\vert \p \Omega_2\vert).
\]
As we will be working explicitly with the $\mathbf{F}$ norm, we recall its definition from Pitts \cite[P. 66]{pitts2014existence} for varifolds $V,W$
\[
\mathbf{F}(V, W) = \text{sup} \{V(f) - W(f) \; : \; f \in C_c(G_k(M)), \;\; |f| \leq 1, \; \text{Lip}(f) \leq 1 \}
\]

\item Let $\vz(M,\Z_2)$ denote Almgren's $\vz$ space (see \cite{almgren1965theory}).
\item Let $\vc(M)$ denote Almgren's $\vc$ space (see \cite{almgren1965theory} and \cite{wang2023existence}).  
\end{itemize} 

The set $\vz(M,\Z_2)$ \cite{almgren1965theory} consists of all pairs $(V,T) \in \mathcal V(M) \times \mathcal \B(M,\Z_2)$ such that there is a sequence $T_k \in \mathcal \B(M,\Z_2)$ with $\vert T_k\vert \to V \in  \V(M)$ and $T_k\to T \in \mathcal B(M,\Z_2)$.  Note that it may or may not be true that $V = \vert T \vert$, but it is always true that $\|\, \vert T\vert \, \|\leq \|V\|$ as measures. An example of strict inequality occurs if we let $\gamma$ denote an equator of $S^2$ with the round metric, and note that $(2 |\gamma|, 0) \in \vz(M, \Z_2)$, as it is a limit of $T_k = S^2 \backslash N_{1/k}(\gamma)$ where $N_{1/k}(\gamma)$ is a tubular neighborhood of $\gamma$ of distance $1/k$. \nl 
%
%
\indent We endow $\vz(M, \Z_2)$ with the product metric, so that for any $(V, T), (V', T')\in \vz(M, \Z_2)$, the $\mathscr{F}$-distance between them is
\[ \mathscr{F}\big( (V, T), (V', T') \big) = \mathbf{F}(V, V') + \mathcal{F}(T, T'). \]
We will use $\vz(M, \mathscr{F}, \Z_2)$ if we wish to emphasize the metric $\mathscr{F}$.

The VC space is entirely analogous but with $\mathcal B(M,\Z_2)$ replaced by $\CC(M)$. The set $\vc(M)$ consists of all pairs $(V,\Omega) \in \mathcal V(M) \times \CC(M)$ such that there is a sequence $\Omega_k \in \CC(M)$ with $\vert \p \Omega_k\vert \to V \in  \V(M)$ and $\Omega_k\to \Omega \in \mathcal C(M)$.    We similarly endow $\vc(M)$ with the product metric, so that for any $(V, \Omega), (V', \Omega')\in \vc(M)$, the $\mathscr{F}$-distance between them is
\[ \mathscr{F}\big( (V, \Omega), (V', \Omega') \big) = \mathbf{F}(V, V') + \mathcal{F}(\Omega, \Omega'). \]
\noindent We will write $\vc(M, \mathscr{F})$ if we wish to emphasize the metric $\mathscr{F}$. \newline 
\indent As noted in the introduction of Mazurowski--Zhou \cite{mazurowski2024infinitely}, the $\vz$ and $\vc$ spaces are convenient for considering min-max with a volume constraint, as a ``pull--tight" procedure for varifolds arising as boundaries of Cacciopoli sets with constrained volume appears difficult to produce. Moreover, these spaces satisfy similar compactness properties (see e.g. \cite[Prop 2.1, 2.2]{mazurowski2024infinitely}). 

\subsection{Acknowledgements}
The authors thank Otis Chodosh for useful discussions in this problem. The second author is supported by an NSF grant, 23-603.

\section{Connectedness of Cacciopoli Sets}
In this section we prove Theorem \ref{pathConnectedThm}. The result for $\CC^{\infty}(M)$ follows by taking a Morse function on the set of interest and contracting via level sets, so we will focus on the result for $\CC_{V_0}^{\infty}(M)$. To show that $\CC_{V_0}^{\infty}(M)$ is path connected, we consider $A, B \in \CC_{V_0}^{\infty}(M)$ and perturb $A$ and $B$ so that their boundaries are transverse.  We will ``shrink" $A \setminus B$ and ``grow"
$B \setminus A$ while preserving $A \cap B$, so that the total volume does not change along this path. We first grow $A \cap B$ to $A$. To do this, we
\begin{enumerate}
\item Grow $\partial A \cap \partial B$ to $\partial A$ using the fact that the boundary is smooth, and by using Morse-theoretic techniques. 
\item Extend this procedure to a smaller inner tubular neighborhood of $\partial A \setminus B$.
\item Extend this to the entirety of $A$.
\end{enumerate}
Call this path $\sigma_1$. We do the same to go from $B \cap A$ to $B$, forming a path, $\sigma_2$. Combining these paths so that the total volume does not change, we obtain the desired path. 
\nl 
\indent We first show that given two smooth sets $A,B \in \CC_{V_0}^{\infty}(M)$, they can be deformed to have transverse intersection.
\begin{lemma}[Generic Transversality] \label{genericTransverse}
Suppose $A, B \in \CC^{\infty}_{V_0}(M)$, then $A, B$ can be deformed in an $\mathbf{F}$-continuous manner to $\tilde{A}, \tilde{B} \in \CC^{\infty}_{V_0}(M)$ such that $\partial \tilde{A}$ is transverse to $\p \tilde{B}$.
\end{lemma}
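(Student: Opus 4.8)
The plan is to use a standard parametric transversality (Thom transversality) argument, being careful to maintain both the smoothness of the boundaries and the volume constraint $|\tilde A| = |\tilde B| = V_0$, and to realize the deformation through an $\mathbf F$-continuous path. First I would fix $A$ and perturb only $B$ (the symmetric treatment being unnecessary). Choose a smooth vector field realization: let $\{X_1,\dots,X_N\}$ be smooth vector fields on $M$ whose values span $T_pM$ at every point, and for $s=(s_1,\dots,s_N)$ in a small ball $U\subset\RR^N$ let $\varphi_s\colon M\to M$ be the diffeomorphism obtained by composing the time-one flows of $s_iX_i$ (or the flow of $\sum s_i X_i$); then $s\mapsto\varphi_s$ is smooth, $\varphi_0=\mathrm{id}$, and the map $U\times M\to M$, $(s,x)\mapsto\varphi_s(x)$ is a submersion. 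Consider the family $B_s:=\varphi_s(B)$, with smooth boundary $\partial B_s=\varphi_s(\partial B)$. By parametric transversality applied to the evaluation map restricted to $\partial B$ and to the submanifold $\partial A\subset M$, the set of $s\in U$ for which $\partial B_s\pitchfork\partial A$ has full measure; in particular we may pick such $s$ arbitrarily close to $0$, giving $\tilde B_0:=B_s$ with $\partial\tilde B_0$ transverse to $\partial A=\partial\tilde A$ (here I can take $\tilde A = A$, so transversality of the pair is immediate).

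The remaining issue is the volume constraint: $\varphi_s$ need not be volume-preserving, so $|B_s|\ne V_0$ in general, though $|B_s|\to V_0$ as $s\to 0$. To correct this I would apply a small normal variation supported near $\partial B_s$ that adjusts the enclosed volume by the required amount $V_0-|B_s|$ (which is as small as we like). Concretely, pick a smooth function $u\colon \partial B_s\to\RR$ with $u>0$ and flow $\partial B_s$ along $\delta\, u\,\nu$, where $\nu$ is the outward unit normal; the enclosed volume depends smoothly and monotonically on $\delta$ with nonzero derivative $\int_{\partial B_s}u\,d\mathcal H^n>0$ at $\delta=0$, so by the inverse function theorem there is a (small) $\delta$ achieving the target volume. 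Since $\partial A$ is compact and $\partial B_s$ is transverse to it, I can choose $u$ to vanish in a neighborhood of $\partial A\cap\partial B_s$ (a finite collection of compact submanifolds), so this correction does not destroy transversality — it leaves $\partial B_s$ unchanged near the intersection locus, and transversality is an open condition elsewhere. The result is $\tilde B\in\CC^\infty_{V_0}(M)$ with $\partial\tilde B\pitchfork\partial\tilde A$.

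Finally, to get an $\mathbf F$-continuous \emph{deformation} rather than merely a nearby set, I concatenate two short paths: first $t\mapsto B_{ts}$, $t\in[0,1]$, which is clearly $\mathbf F$-continuous since $s\mapsto\varphi_s$ is smooth (so $\chi_{B_{ts}}\to\chi_B$ in $L^1$ and $|\partial B_{ts}|\to|\partial B|$ as varifolds); then the normal-variation path $t\mapsto$ (boundary flowed by $t\delta u\nu$), again $\mathbf F$-continuous. Both pieces move the boundary smoothly with bounded area, so no sudden varifold mass appears and the $\mathbf F$-distance varies continuously — this is exactly the kind of ``honest'' boundary motion that, unlike the annulus example in the introduction, is safe in $\mathbf F$. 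I expect the main obstacle to be the bookkeeping that simultaneously (a) keeps the total volume pinned at $V_0$ along the path and (b) does not reintroduce non-transverse intersections when performing the volume correction; the resolution is precisely to localize the volume-correcting variation away from the (compact) intersection set $\partial A\cap\partial B_s$, using that transversality is open and that the volume functional has surjective differential under compactly-supported-away-from-that-set normal variations.
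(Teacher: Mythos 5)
Your approach via ambient diffeomorphisms and parametric (Thom) transversality is a legitimate alternative to the paper's argument, which instead perturbs $\partial A$ graphically in Fermi coordinates, choosing a small Sard-type perturbation of the graph function and correcting volume inside a ball $U''\subset\partial A$ disjoint from $\partial B$. Your route also has the minor advantage of automatically handling the degenerate case $\partial A=\partial B$, which the paper treats separately. The transversality step, the use of openness/compactness to localize the volume correction away from $\partial A\cap\partial B_s$, and the assertion that smooth isotopies of the boundary give $\mathbf F$-continuity are all sound.

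However, there is a genuine gap concerning the volume constraint \emph{along the path}, not merely at the endpoints. Your deformation is the concatenation of $t\mapsto B_{ts}$ followed by the normal-variation correction, and the first piece does not preserve volume: $|B_{ts}|\ne V_0$ for $t\in(0,1)$ in general, since $\varphi_s$ is not volume-preserving. The lemma is invoked in the proof of Theorem~\ref{pathConnectedThm} as a ``WLOG'' preliminary step, and the resulting path $\sigma$ is asserted to satisfy $|\sigma(t)|=V_0$ for \emph{all} $t$; consistently with this, the paper's own proof builds the deformation $A(t)$ so that $|A(t)|\equiv V_0$ throughout by tying the volume-correcting parameter $p(t)$ into the path itself. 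To repair your argument you should make the interpolating family volume-preserving at every $t$: either generate $\varphi_s$ from divergence-free vector fields $X_i$ (which still span $T_pM$ at every point when $n+1\ge 2$, so the submersion/transversality step is unaffected, and then $|B_{ts}|\equiv V_0$ automatically), or interleave the normal-variation correction along the path, choosing for each $t$ the small $\delta(t)$ depending continuously on $t$ so that the corrected set has volume exactly $V_0$, and verifying (as you essentially argue for the endpoint) that this continuous correction stays small in $C^1$ and is supported away from the intersection locus for all $t$.
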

\begin{proof}
First suppose that $\partial A \neq \partial B$. As these are smooth hypersurfaces, we know that $\partial A \backslash \overline{B}$ contains an open ball $U \subseteq \partial A$ with $\mathcal{H}^{n-1}(U) \neq 0$. Let 
\begin{align*}
F_A&: \partial A \times [-\delta, \delta] \to M \\
F_A(s,t) &= \exp_{s}(t \nu(s))
\end{align*}
be a Fermi coordinate parameterization of a tubular neighborhood of $\partial A$. By generic transversality results, there exists $f: \partial A \to \R$ such that $||f||_{C^1}$ can be made arbitrarily small and define $\tilde{A}$ so that 
\[
\partial \tilde{A} = \{F(s, f(s)) \; | \; s \in \partial A \}
\]
and $\partial \tilde{A}$ is transverse to $\partial B$. \nl 
\indent By potentially choosing a smaller ball $U' \subseteq U \subseteq \partial A$, we can actually further modify $f \Big|_{U'}$ without affecting transversality because $\partial B \cap U = \emptyset$. Now consider 
\begin{align*}
f&: [0,1] \times \partial A \to \R \\
f(t,s) &= (1 - \eta_{U''}(s)) t f(s) + \eta_{U''}(s) t p(t)
\end{align*}
Here, $\eta_{U''}(s)$ is a bump function which is $1$ on some smaller ball $U'' \subseteq U'$ and vanishes outside of $U'$. We then choose $p: [0,1] \to \R$ so that 
\[
A(t) = \text{Int} \left( \{F(s, f(t,s)) \; | \; s \in \partial A \} \right)
\]
has constant volume equal to $V_0$. Note that such a $p(t)$ exists by choosing our generic perturbation, $f$, to have arbitrarily small $C^1$ norm. Note that $A(t)$ is an $\mathbf{F}$-continuous deformation of sets because it is a graphical perturbation of the boundary. Thus $\tilde{A} = A(1)$ is the desired set with $A(t)$ being the desired $\mathbf{F}$-continuous deformation. \nl 
\indent If $\partial A = \partial B$, then choose two disjoint open balls $U_1, U_2 \subseteq \partial A$. We push $\p A$ in along $U_1$ and out along $U_2$ so that the resulting $\tilde{A}$ satisfies $\p \tilde{A} \neq \p B$. We can do this while preserving the total volume. Consider 
\begin{align*}
f&: [0,1] \times \partial A \to \R \\
f(t,s) &= \eta_{U_1}(s) t -\eta_{U_2}(s) p(t)
\end{align*}
where $p: [0,1] \to \R^+$ is chosen so that 
\[
A(t) = \text{Int} \left( \{F(s, f(t,s)) \; | \; s \in \partial A \} \right)
\]
has constant volume for all $t \in [0,1]$. Letting $\tilde{A} = A(1)$, we can apply the previous argument of the lemma.
\end{proof}
\noindent We now establish the main lemma: any smooth Cacciopoli set can be contracted to the empty set in an $\mathbf{F}$-continuous way.
\begin{lemma} \label{contractLemma}
Suppose $\Omega^n \subseteq M^n$ a Cacciopoli set with smooth boundary and $n \geq 2$. Then there exists 
\[
\sigma: [0,1] \to \Omega
\]
Such that $\sigma(1) = \Omega$, $\sigma(0) = \emptyset$ and $\sigma$ is $\mathbf{F}$-continuous. When $n = 1$, such a map exists but it is only $\mathcal{F}$-continuous.
\end{lemma}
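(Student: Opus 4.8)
My plan is to use Morse theory on $\Omega$ to build the contraction, being careful that the family is continuous in the $\mathbf{F}$-norm and not merely in $\mathcal{F}$. First I would choose a Morse function $f\colon \overline{\Omega} \to [0,1]$ with $f^{-1}(1) = \partial\Omega$, with $f^{-1}(0)$ a single interior point $p$ (a local minimum), and with no critical points on the level set $f^{-1}(1)$ — i.e. $\partial\Omega$ is a regular level set, so that $\nabla f$ points transversally out of $\Omega$ along $\partial\Omega$. Such a function exists on any compact manifold with boundary (handle decomposition relative to the boundary). Then set $\sigma(t) = \{x \in \overline{\Omega} : f(x) \le t\}$ for $t \in (0,1]$ and $\sigma(0) = \emptyset$. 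Each $\sigma(t)$ is a Cacciopoli set, and $\sigma(1) = \Omega$; the issue is continuity of $t \mapsto (|\partial\sigma(t)|, \chi_{\sigma(t)})$.

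The $\mathcal{F}$-continuity (i.e. $L^1$-continuity of $\chi_{\sigma(t)}$) is immediate by the coarea formula, since the sublevel sets are nested and their volumes vary continuously (there are no "jumps" because a single level set has measure zero, and volume is continuous along nested families). The subtle point — and the main obstacle — is continuity of $t \mapsto |\partial\sigma(t)| = \mathcal{H}^n \llcorner f^{-1}(t)$ as varifolds in the $\mathbf{F}$-metric, including at the endpoints $t=0$ and $t=1$. Away from critical values this is standard: near a regular value $t_0$, the level sets $f^{-1}(t)$ foliate a collar and $\mathcal{H}^n \llcorner f^{-1}(t) \to \mathcal{H}^n \llcorner f^{-1}(t_0)$ smoothly, hence in $\mathbf{F}$. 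At an interior critical value $c \in (0,1)$, one must check that no varifold mass is "lost" — but because $f$ is Morse, the singular level set $f^{-1}(c)$ differs from nearby regular level sets only in a region of arbitrarily small measure around finitely many critical points, and the $n$-dimensional measure of the level sets of a Morse function near a nondegenerate critical point is continuous (the level set passes continuously through a quadric cone of measure zero); this gives $\mathbf{F}$-continuity there. At $t=0$, as $t \downarrow 0$ the level set $f^{-1}(t)$ is a small sphere around $p$ whose $\mathcal{H}^n$-measure tends to $0$, so $|\partial\sigma(t)| \to 0 = |\partial\emptyset|$ in $\mathbf{F}$ — here we use $n \ge 2$, since for a curve bounding a region ($n=1$) the boundary of a small sublevel disk around a minimum is two points whose total length is $0$ but... actually the genuine failure at $n=1$ is different and I address it below. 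At $t=1$: since $\partial\Omega$ is a \emph{regular} level set, $f^{-1}(t) \to f^{-1}(1) = \partial\Omega$ smoothly as $t \uparrow 1$, so $\mathbf{F}$-continuity holds at the right endpoint.

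For the case $n=1$: here $\Omega$ is a $1$-manifold with boundary inside $M^1$, i.e. a finite union of closed intervals (arcs), and $\partial\Omega$ is a finite set of points, so $|\partial\Omega|$ is a $0$-dimensional varifold — a sum of point masses. The contraction by sublevel sets of a Morse function works to shrink each arc to a point and then delete it, and this is $\mathcal{F}$-continuous (the $L^1$-norm of $\chi_{\sigma(t)}$ varies continuously). But it cannot be $\mathbf{F}$-continuous: as an arc shrinks to a point its two boundary points collide into one point with multiplicity $2$ (total mass $2$) before vanishing (mass $0$), forcing a jump of $2$ in the $\mathbf{F}$-distance, exactly the phenomenon illustrated in Figure \ref{fig:varifoldex}. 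Hence for $n=1$ only $\mathcal{F}$-continuity is available, and this is what the statement claims. I would write the $n \ge 2$ argument in detail and dispatch $n=1$ with this remark.

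Throughout, the one genuinely technical estimate is the $\mathbf{F}$-continuity of the level-set varifolds of a Morse function across an interior critical value, which I would reduce to a local computation in Morse coordinates: near a critical point of index $k$, $f = c - x_1^2 - \cdots - x_k^2 + x_{k+1}^2 + \cdots + x_{n+1}^2$, and one checks directly that $\mathcal{H}^n\llcorner\{f = c + s\} \to \mathcal{H}^n\llcorner\{f = c\}$ in $\mathbf{F}$ as $s \to 0$ on a small ball, since the level sets are uniformly bounded smooth pieces of quadrics degenerating to the cone, which has the same $\mathcal{H}^n$-measure as its smooth perturbations in the limit.
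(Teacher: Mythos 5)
Your proposal is correct and takes the same route as the paper: choose a Morse function on $\overline{\Omega}$ with $\partial\Omega$ as its maximal (regular) level set and a single interior point as its minimum, sweep out by sublevel sets, and verify $\mathbf{F}$-continuity of the level-set varifolds across critical values and at the endpoints, with $n\ge 2$ used exactly where you use it (shrinking spheres have vanishing mass) and $n=1$ failing for the reason you give (the two boundary points of a shrinking arc carry mass $2$ in the limit). Your treatment of interior critical values via the local quadric model is slightly more explicit than the paper's one-line appeal to smooth convergence away from a codimension-$\ge 1$ set, but the argument is the same in substance.
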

\begin{proof}
Choose a Morse function $f: \Omega \to \R$ such that $f^{-1}([0,1]) = \Omega$ and $f^{-1}(0) = p$ where $p$ is a fixed interior point. Such a Morse function is a consequence of work of Milnor (see \cite[Thm 2.5]{milnor2025lectures}). We note that for $s \neq 0$, $\partial f^{-1}([0,t]) \rightarrow \partial f^{-1}([0,s])$ for $t \to s$ and the convergence occurs smoothly away from finitely many points. When $s = 0$, $\p f^{-1}([0,t])$ converges in a manner of a sphere converging to a point, which is also $\mathbf{F}$-continuous. Note that because $n \geq 2$ and $\p f^{-1}([0,t])$ is $n-1$-dimensional, points are at least codimension $1$ and smooth convergence away from a set of codimension $\geq 1$ implies convergence in $\mathbf{F}$. Now let $\sigma(t) = f^{-1}([0,t])$. \nl 
\indent When $n = 1$, any Cacciopoli set is necessarily diffeomorphic to a finite union of intervals whose closure is disjoint. In this case, there exists a path $\sigma: [0,1] \to \CC(M)$ which contracts each of these intervals to a point, though this is only $\mathcal{F}$-continuous and not $\mathbf{F}$-continuous.
\end{proof}
\noindent We now show that given $A, B \in \CC_{V_0}^{\infty}(M)$, we can separate $A \backslash B$ from $B$ in an $\mathbf{F}$-continuous way.
\begin{lemma}[Carving Lemma] \label{lem:carving}
Let $n \geq 2$. Let $A,B \in \CC_{V_0}^{\infty}(M)$ such that $\partial A \cap \partial B$ is a $(n-2)$-dimensional smooth manifold. Then there exists a continuous path
\[
\sigma: [0,1] \to (\CC(M), \mathbf{F})
\] 
such that $\sigma(0) = A$, $\sigma(t) \subseteq A$ for all $t$, and $\sigma(1) = S_A \sqcup B$, where $\overline{S_A} \cap \overline{B} = \emptyset$ and $S_A \subseteq A \backslash B$ is itself a smooth Cacciopoli set. Moreover, $\sup_{t \in [0,1]}\mathbf{F}(\sigma(t), A)$ can be taken to be arbitrarily small.
\end{lemma}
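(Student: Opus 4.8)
The strategy is to excise from $A$, monotonically and $\mathbf F$-continuously, the thin slab $\{-\eps\le\phi\le-\eps/2\}\cap A$ lying just outside $\overline B$ — its removal separates $\{\phi<-\eps\}\cap A$ from $\{\phi>-\eps/2\}\cap A$ inside $A$ — and then to read off $S_A$ and the $B$-side component from the resulting set. For the setup: by hypothesis $\partial A$ and $\partial B$ are transverse, so $\Sigma:=\partial A\cap\partial B$ is a closed submanifold and $A\setminus\overline B\neq\varnothing$; fix smooth defining functions $h$ for $A$ (so $\{h>0\}=A$, $\{h=0\}=\partial A$, $\nabla h\neq 0$ on $\partial A$) and $\phi$ for $B$, set $H:=\sup_M h$, and fix $\eps>0$ small enough that $A\cap\{\phi<-\eps\}\neq\varnothing$ (possible since $A\setminus\overline B$ is open and nonempty).

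Pick $\psi\in C^\infty(M)$ with $\psi\ge 0$, $\psi\equiv 0$ outside the thin collar $\{-2\eps<\phi<-\eps/4\}$, $\psi\ge H+1$ on $\{-\eps\le\phi\le-\eps/2\}$, and (see below) generic, and put
\[
\sigma(\tau):=\{\,h-\tau\psi>0\,\}\in\CC(M),\qquad \tau\in[0,1].
\]
Then $\sigma(0)=\{h>0\}=A$, and since $\psi\ge 0$ we have $h-\tau\psi\le h$, so $\sigma(\tau)\subseteq A$ and $\tau\mapsto\sigma(\tau)$ is non-increasing. As $\psi>\sup_M h$ on $\{-\eps\le\phi\le-\eps/2\}$, the set $\sigma(1)=\{h>\psi\}$ is empty there, hence splits as $\sigma(1)=S_A\sqcup R$ with $S_A:=\sigma(1)\cap\{\phi<-\eps\}$ and $R:=\sigma(1)\cap\{\phi>-\eps/2\}$. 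Since $\sigma(1)$ avoids $\{\phi=-\eps\}$ and $\{\phi=-\eps/2\}$ (where $h\le H<\psi$), the closed sets $\overline{S_A}\subseteq\{\phi<-\eps\}$ and $\overline R\subseteq\{\phi>-\eps/2\}$ are disjoint from each other and from $\overline B\subseteq\{\phi\ge 0\}$; thus $S_A\subseteq A\setminus B$, $\overline{S_A}\cap\overline B=\varnothing$, $\overline{S_A}\cap\overline R=\varnothing$, while $R\supseteq A\cap\{\phi\ge-\eps/4\}\supseteq A\cap B$. A small generic perturbation of $\psi$ makes $0$ a regular value of $h-\psi$ (the only obstruction being $\nabla(h-\psi)$ vanishing on $\{h=\psi\}\cap\{\psi>0\}$), so $\sigma(1)$, $S_A$, $R$ are smooth Caccioppoli sets; here $R$ plays the role of the set denoted "$B$" in the statement — it is $A$ with an $O(\eps)$-collar excised.

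The crux is $\mathbf F$-continuity of $\sigma$. (Only $\sigma(0)$ and the two pieces of $\sigma(1)$ must be smooth — intermediate $\sigma(\tau)$ may be singular — which is what makes this one-parameter family admissible.) Flat continuity is immediate from uniform convergence of $h-\tau\psi$ and $\lvert\{h-\tau_0\psi=0\}\rvert=0$. For the varifold part, set $G(x,\tau):=h(x)-\tau\psi(x)$ on $M\times(0,1)$; since $\nabla_{(x,\tau)}G$ cannot vanish on $G^{-1}(0)$ (where $\psi(x)=0$ one has $G=h(x)$ and $\nabla h\neq 0$ on $\{h=0\}$), the zero set $G^{-1}(0)$ is a smooth hypersurface and the boundaries $\partial\sigma(\tau)=\{G(\cdot,\tau)=0\}$ are the fibres of the projection $\pi\colon G^{-1}(0)\to(0,1)$. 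For generic $\psi$, $\pi$ is a Morse function (parametric transversality), hence has finitely many critical values $\tau_1,\dots,\tau_k$, at each of which $\partial\sigma(\tau_i)$ has only finitely many singular points, near each a non-degenerate quadric. Away from the $\tau_i$ the implicit function theorem gives smooth, hence $\mathbf F$-continuous, dependence on $\tau$. Across a $\tau_i$: outside small balls $B_r$ about the (finitely many, codimension $\ge 1$) singular points the fibres converge smoothly, and inside each $B_r$ the fibres carry $\mathcal H^n$-mass $O(r^n)$ uniformly in $\tau$, so $\mathbf F\big(\lvert\partial\sigma(\tau)\rvert,\lvert\partial\sigma(\tau_i)\rvert\big)\to 0$ — exactly the argument used for Lemma~\ref{contractLemma}. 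Thus $\sigma\colon[0,1]\to(\CC(M),\mathbf F)$ is continuous, and together with the above this produces the path with all stated properties.

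For the final clause, $A\setminus\sigma(\tau)\subseteq A\cap\{-2\eps<\phi<-\eps/4\}=A\cap\operatorname{supp}\psi$, so $\mathcal F(\sigma(\tau),A)=\lvert A\setminus\sigma(\tau)\rvert\le\lvert A\cap\{-2\eps<\phi<-\eps/4\}\rvert=O(\eps)$, uniformly in $\tau$ and arbitrarily small. (Note $\M(\partial\sigma(1))\approx\M(\partial A)+2\,\mathcal H^n(A\cap\partial B)$, since disconnecting $A$ across the wall $A\cap\partial B$ unavoidably produces two new faces of that area; so the clause is to be read with $\mathcal F$ in place of $\mathbf F$ — the flat/volume part being what the later volume bookkeeping needs.) I expect the main obstacle to be precisely this $\mathbf F$-continuity across the topology changes of $\sigma(\tau)$: securing the genericity of $\psi$ so that the critical fibres of $\pi$ are Morse, then running the codimension-$\ge 1$ varifold argument of Lemma~\ref{contractLemma}; a minor point is verifying that removing $\{-\eps\le\phi\le-\eps/2\}\cap A$ really disconnects $A$, which is just that a path in $A$ joining $\{\phi<-\eps\}$ to $\{\phi>-\eps/2\}$ must cross $\{-\eps\le\phi\le-\eps/2\}$.
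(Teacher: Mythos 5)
Your argument is correct in substance but takes a genuinely different route from the paper. The paper works extrinsically: it sets $S=A\cap\p B$, carves out an explicit Fermi-coordinate neighborhood $S_\delta\times[0,\eta]$ of the truncated wall $S_\delta=S\setminus N_\delta(\p S)$, glues on a hand-built tapering neighborhood near $\p S$, checks $\mathbf F$-continuity of each piece separately, and then desingularizes the resulting manifold with corners. You instead realize the entire path as sublevel sets $\sigma(\tau)=\{h-\tau\psi>0\}$ and reduce $\mathbf F$-continuity to the Morse-theoretic mechanism already used in Lemma~\ref{contractLemma} (smooth convergence away from finitely many nondegenerate critical points, plus uniform mass bounds on small balls around them). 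This buys you three things: intermediate sets need not be smooth, so the corner-desingularization step disappears; the $n=2$ case needs no separate treatment, since your moving boundaries are hypersurfaces of the ambient manifold (the paper's product $\sigma_\delta(t)\times[0,\eta]$ degenerates when $S$ is one-dimensional and forces a special argument); and the continuity analysis is localized at finitely many points rather than along all of $\p S$. The cost is that the Morse genericity of $\pi$ (equivalently of $u=h/\psi$ on $\{\psi>0\}$, within the constrained class of admissible $\psi$) is asserted rather than proved; it is fixable by perturbing $\psi$ on a compact subset of $\{\psi>0\}$ --- away from which you correctly note $\nabla_{(x,\tau)}G\neq0$ automatically --- and is no less rigorous than the paper's own continuity verifications.

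Two caveats. First, your $\sigma(1)=S_A\sqcup R$ with $R=\sigma(1)\cap\{\phi>-\eps/2\}\supsetneq A\cap B$, whereas the paper carves along $\p B$ itself and lands essentially on $S_A\sqcup(A\cap B)$. The statement's ``$\sigma(1)=S_A\sqcup B$'' is literally unattainable by anyone since $\sigma(1)\subseteq A$, so reinterpretation is forced; but in the proof of Theorem~\ref{pathConnectedThm} the second pieces produced by carving $A$ and by carving $B$ are treated as one common set. With your version they are $A\cap\{\phi_B>-\eps/2\}$ and $B\cap\{\phi_A>-\eps/2\}$, which differ by an $O(\eps)$ collar, so a short additional graphical (hence $\mathbf F$-continuous, volume-controllable) homotopy between them would be needed downstream; say so explicitly. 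Second, your remark that the final clause can only hold with $\mathcal F$ in place of $\mathbf F$ is correct and is a genuine catch: both your construction and the paper's create new boundary of measure roughly $2\mathcal H^{n-1}(A\cap\p B)$, and the varifold $\mathbf F$-distance dominates the difference of total masses (test with $f\equiv1$), so $\sup_t\mathbf F(\sigma(t),A)$ cannot be made small no matter how $\delta$ or $\eps$ is chosen. Only the $\mathcal F$-smallness is used in the proof of Theorem~\ref{pathConnectedThm}, so this is a defect of the lemma's statement rather than of your argument.
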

\begin{proof}[Proof of Lemma~\ref*{lem:carving}]
First suppose $n \geq 3$. Let $S = A \cap  \p B$; $S$ is an $n-1$-dimensional manifold with smooth boundary given by $\p S = \p A  \cap \p B$. Let $\delta > 0$ and consider $S_{\delta} = S \backslash N_{\delta}(\partial S)$. Let 
\begin{align*}
N&: S_{\delta} \times [-\eta, \eta] \to A \\
N(s, t) &= \exp_{s}(t \nu_S(s))
\end{align*}
i.e. the normal exponential map in Fermi coordinates on $S \subseteq A$, where $\nu_S$ points towards $A \backslash B$. Note that for each $\delta > 0$, there exists an $\eta > 0$ such that the above is well defined. By convention, we choose $\eta < \delta$. Now we can ``carve out" $S_{\delta}$ (i.e. produce an $\mathbf{F}$-continuous map from $\emptyset \to S_{\delta}$) using $\sigma_{\delta}:[0,1] \to S_{\delta}$ as in Lemma \ref{contractLemma}, noting that for $\delta$ sufficiently small $\partial S_{\delta}$ will be smooth. Now consider the map
\begin{align*}
F_{\delta}&: [0,1] \to A \backslash B \\
F_{\delta}(t) &= \sigma_{\delta}(t) \times [0, \eta] 
\end{align*}
where in the right hand side we abuse notation writing Fermi coordinates in a product form. Clearly, $F_{\delta}$ is continuous in the $\mathcal{F}$ norm. To see continuity in the $\mathbf{F}$ norm, note that 
\begin{align*}
\partial F_{\delta}(t) &= \partial \sigma_{\delta}(t) \times [0, \eta] \sqcup \sigma_{\delta}(t) \times \{-\eta\} \sqcup \sigma_{\delta}(t) \times \{\eta\} \\
||\partial F_{\delta}(t)|| &= ||\partial \sigma_{\delta}(t) \times [0, \eta]|| + ||\sigma_{\delta}(t) \times \{0\}|| + ||\sigma_{\delta}(t) \times \{\eta\}||
\end{align*}
Because $\sigma_{\delta}(t)$ is $\mathbf{F}$-continuous, each varifold summand on the right hand side varies continuously with $t$. \nl 
\indent When $n = 2$, the map $F_{\delta}(t)$ is not $\mathbf{F}$-continuous, however we can still produce an $\mathbf{F}$-continuous map 
\begin{align*}
G_{\delta}&: [0,1] \to A \\
G_{\delta}(0) &= \emptyset \\
G_{\delta}(1) &= S_{\delta} \times [0, \eta]
\end{align*}
Note that since $n = 2$, $S$ and $S_{\delta}$ are both diffeomorphic to a finite union of intervals. So in order to construct an $\mathbf{F}$-continuous map which begins at the emptyset and ends at $S_{\delta} \times [0,\eta]$, it suffices to construct an $\mathbf{F}$-continuous map from the empty set to $[0,1]^2$ (since being $\mathbf{F}$-continuous is diffeomorphism invariant). One such example is $f(t) = [0,1]^2 \cap \{x^2 + y^2 \leq 2t\}$. \nl 
\indent For the remainder of the proof, we now assume $n \geq 2$. We proceed to carve out a non-uniform neighborhood of $S \backslash S_{\delta}$. 
\begin{align*}
\gamma_{\delta}&: [0,1] \to A \\
\gamma_{\delta}(t) &= \{N\left(s, r \right) \; | \; s \in S \backslash S_{\delta}, \quad 0 \leq |r| \leq \overline{\eta}\left( \frac{\text{dist}(s, \partial S)}{\delta t} \right) \cdot \min \left(\eta, \text{dist}(s, \partial S)\right) \} 
\end{align*}
here, $\overline{\eta}: \mathbb{R}^{\geq 0} \to \mathbb{R}^{\geq 0}$ is a bump function which is $1$ on $[0,1]$, has bounded first derivative, vanishes for all $|x| \geq 2$, and is positive on $(-2, -1) \cup (1, 2)$. The idea is that for a fixed value of $t \neq 0$, a small tubular neighborhood is carved out for points close to $\partial S$. For points very far away, i.e. $\text{dist}(s, \partial S) > t \delta$, the size of the neighborhood gets smaller until it vanishes. By varying $t$ from $0$ (for which this map  just produces $S$) to $1$, we continuously carve out a non-uniform normal (non-tubular) neighborhood of $S \backslash S_{\delta}$ where the radius of the neighborhood gets smaller as one approaches $\partial S$. \nl
\indent $\gamma(t)$ is clearly continuous with respect to the $\mathcal{F}$ norm. To see continuity with respect to the $\mathbf{F}$-norm, note that
\begin{align*}
\partial \gamma_{\delta}(t) &= \{N(s, \eta(\text{dist}(s, \partial S)/\delta t) \cdot \text{dist}(s, \partial S)) \; | \; s \in S \backslash S_{\delta}\} \\
& \bigsqcup \{N(s,0) \; | \; s \in S \backslash S_{\delta}\} \\
& \qquad \bigsqcup \{N(s, r) \; | \; \text{dist}(s, \partial S) = 2 \delta t \; | \;  0 \leq |r| \leq \eta \left( \frac{\text{dist}(s, \partial S)}{\delta t} \right) \cdot \text{dist}(s, \partial S)\} \\
&= V_+(t) + V_-(t) + V_b(t)
\end{align*}
Given any $t_0\neq 0$, and any $s$ with $\text{dist}(s, \partial S) \neq 2 \delta t$, the convergence of $\text{supp}(V_{\pm})(t) \to \text{supp}(V_{\pm})(t_0)$ is graphical over $s$ hence $\mathbf{F}$-continuous locally, since $V_{\pm}$ has density $1$ on its support. Note that the graphical convergence fails for all 
\[
\text{dist}(s, \partial S) \in \{0, 2 \delta t\}.
\]
However, the set of the above such $s$ necessarily has $(n-2)$-dimensional Hausdorff measure - this is because $\delta$ is chosen sufficiently small so that the normal exponential map of $\partial S \to S$ is injective and smooth. Thus for the purpose of establishing continuity of the $(n-1)$-varifolds $V_{\pm}(t)$, this is irrelevant. A similar argument works for $V_b(t)$ - the set of 
\[
\{s \in S \backslash S_{\delta } \; | \; \text{dist}(s, \partial S) = 2 \delta t\}
\]
varies continuously in $t$ and so does the parameter of the normal coordinate
\[
0 \leq |r| \leq \eta \left( \frac{\text{dist}(s, \partial S)}{\delta t} \right) \cdot \text{dist}(s, \partial S)
\]
which establishes continuity of $V_b(t)$ with respect to $t$.  \nl 
\indent To see continuity at $t = 0$, note that for values close to $t = 0$, $\eta(\text{dist}(s, \partial S) / t \delta) \neq 0$ if and only if $\text{dist}(s, \partial S) < 2 \delta t$ is close to $0$. However in this case, it is clear that each of $V_{\pm}(t), V_b(t)$ converge to $0$ as varifolds as $t \to 0$. This follows as the normal coordinate is less than a constant times $\text{dist}(s, \partial S)$, and hence we can bound the varifold mass by a constant times $\text{dist}(s, \p S) < 2 t \delta$, which tends to $0$ with $t$. \nl 
\indent Now consider the conglomerate map
\[
\sigma(t) = A \backslash \begin{cases}
    \gamma_{\delta}(2t) & t \in [0,1/2] \\
    \gamma_{\delta}(1) \cup \left[ S_{\delta} \times [0, \eta] \backslash F_{\delta}(2 - 2t)\right] & t \in [1/2, 1]
\end{cases}
\]
which will be $\mathbf{F}$- continuous for all $t \in [0,1]$. See Figures \ref{fig:ngbdexcise} \ref{fig:exciseresult} for images of the resulting set.
\begin{figure}[h!]
\centering
\includegraphics[scale=0.7]{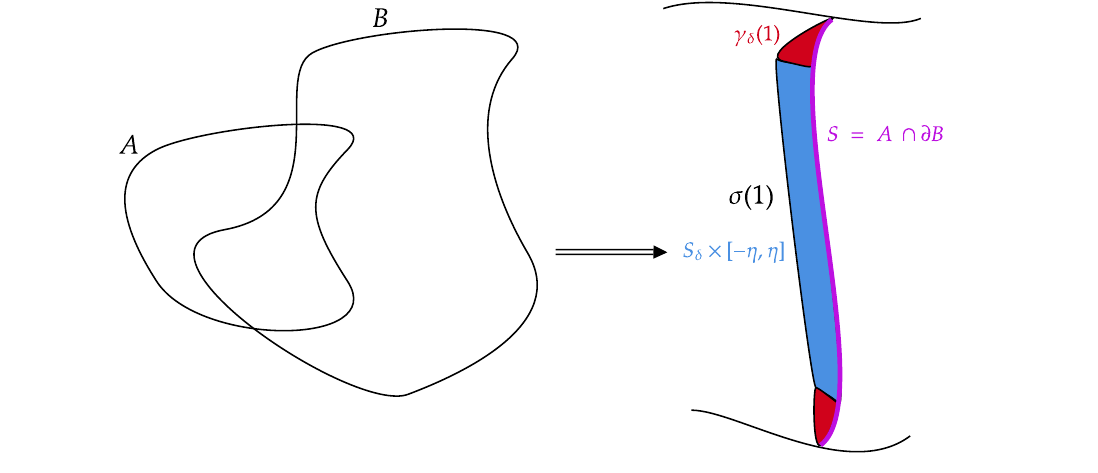}
\caption{Carving around $A \cap \partial B$ via $\sigma(t)$}
\label{fig:ngbdexcise}
\end{figure}
\begin{figure}[h!]
\centering
\includegraphics[scale=0.7]{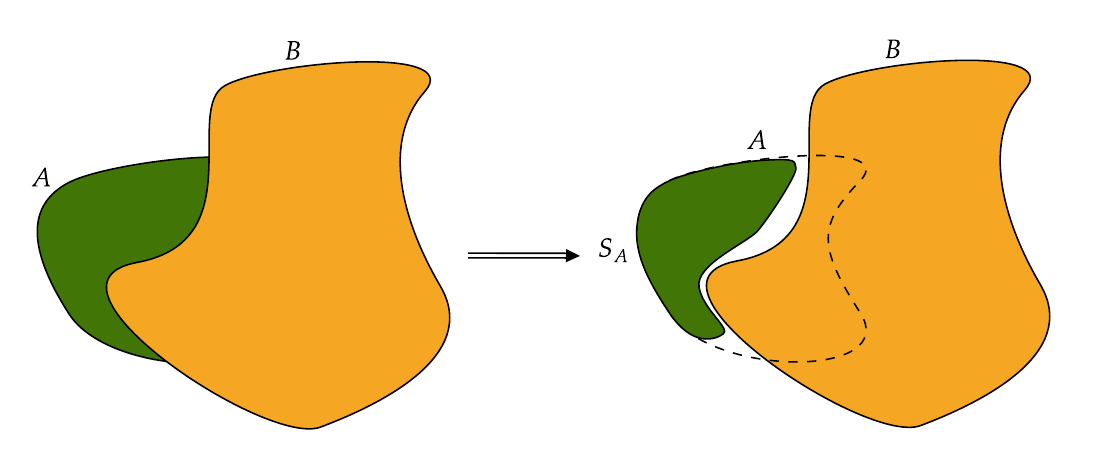}
\caption{$S_A$, the resulting set, visualized after smoothing. Note that $\sigma(1) = S_A \sqcup B$.}
\label{fig:exciseresult}
\end{figure}
Let $S_A = (A \backslash B) \cap \sigma(1)$. Note that $\overline{S_A} \cap \overline{A \cap B} = \p S$ and $S_A$ is a manifold with corners. We now smooth out these corners and separate $S_A$ from $\p S$ so that the smoothened set has a closure disjoint from $\p S$. Note that \cite[3.2]{CL} includes a section about parametrically smoothing manifolds with corners. We also describe a mechanism for this. \nl 
\indent Since $\p S = \partial A \cap \partial B$ is itself a smooth codimension $2$ manifold, we know that a neighborhood of $\p S$, $U(\p S)$, is diffeomorphic to $\p S \times D^2$, where coordinates on $D^2$ are given by $y(p) = \text{dist}(p, \partial A)$ and $z(p) = \text{dist}(p ,\partial B)$ are signed distances, taking the positive direction corresponding to the outer unit normals for both $\partial A, \partial B$. Moreover, we note that in these coordinates
\[
U(\p S) \cap (A \backslash B) = \{(q, y, z) \in \p S \times D^2 \; | \; y \leq 0, z \geq 0\}
\]
Thus, in order to smooth out $S_A \subseteq (A \backslash B)$, it suffices to find a smoothing of $U(\p S) \cap (A \backslash B)$ which lies inside of $A \backslash B$. From the above, using the global coordinates for $U(\p S) \cong \p S \times D^2$, it suffices to find a smoothing of the set 
\[
\{(y,z) \in \R^2 \; | \; y^2 + z^2 \leq 1, \; y \leq 0, z \geq 0\}
\]
near $y = 0 = z$. One way to do this would be via a smooth parameterized curve $(y(t), z(t))$ such that 
\begin{align*}
y, z&: (-\infty, \infty) \to \R \\
y(t) &= \begin{cases}
0 & \forall t < 0 \\
t - 1/2 & \forall t \geq 1
\end{cases} \\
z(t) &= \begin{cases}
-t + 1/2 & t \leq 0 \\
0 & t \geq 1
\end{cases} \\
y(0)&= 0, \; y(1) = 1/2, \; z(0) = 1/2, \; z(1) = 0 
\end{align*}
and $(y(t), z(t))$ is a smooth curve on $[0,1]$ which interpolates between the boundary conditions and so that the curve is convex. Let the entire curve be given by $\gamma(t) = (y(t), z(t))$ (see figure \ref{fig:desingcurve})
\begin{figure}[h!]
\centering
\includegraphics[scale=0.5]{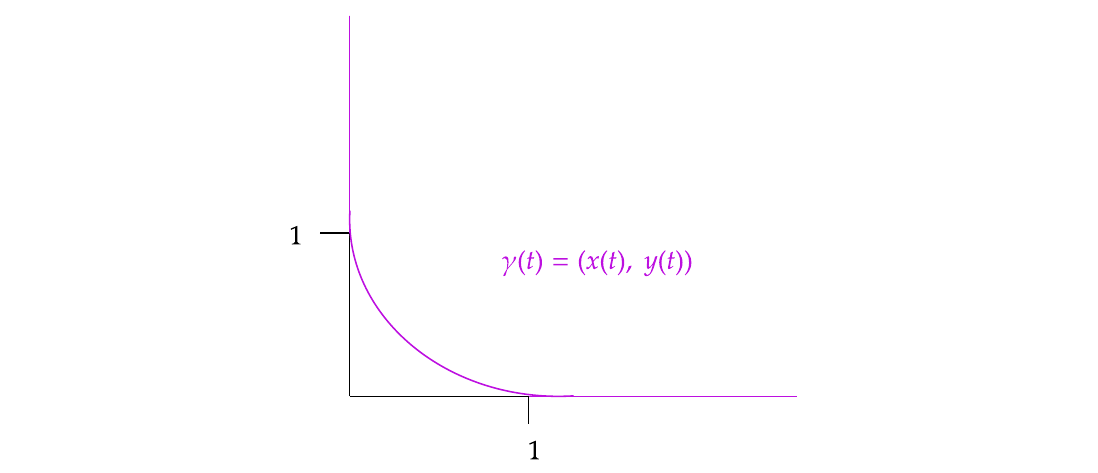}
\caption{Desingularization of manifold with corner}
\label{fig:desingcurve}
\end{figure}
and define $\eta \gamma(t) = (\eta y(t), \eta z(t))$ for any $\eta > 0$. Let $\nu_t$ be the normal to this curve which has positive $\p_y$ direction, and use this to define a signed distance to $\gamma(t)$. We now define 
\[
\Omega_{\eta} = \{(y,z) \in D^2 \; | \; \text{dist}((y,z), \eta\gamma) \geq 0 \}
\]
Clearly this is a set with smooth boundary given by $\eta \gamma$. Moreover, the boundaries converge graphically everywhere for all $\eta > 0$ and graphically everywhere except from $y = z = 0$ at $\eta = 0$. Thus $\{\Omega_{\eta}\}$ provide a one parameter sequence of sets whose boundaries converge continuously in the mass topology on varifolds. Thus the lemma is proved by using this desingularization of $U(\p S) \cap (A \backslash B)$ and then leaving the rest of the set unchanged. \nl 
\indent Note that if we choose $\delta$ arbitrarily small in our initial construction, then $\sup_{t \in [0,1]}\mathbf{F}(\sigma(t), A)$ can be made arbitrarily small as well.
\end{proof}
\subsection{Proof of Thm \ref{pathConnectedThm}}
Given $A, B \in \CC^{\infty}(M)$ it suffices to show that one can find an $\mathbf{F}$-continuous map from $A \to \emptyset$ and $B \to \emptyset$. This follows immediately from Lemma \ref{contractLemma}. \nl 
\indent Given $A, B \in \CC_{V_0}^{\infty}(M)$, WLOG suppose that $A \neq B$ else the constant path works. Apply Lemma \ref{genericTransverse} so that WLOG, we can assume $A, B$ have transversally intersecting boundaries. Choose $\eps < \min(V_0, |B \backslash A|, |A \backslash B|)/4$. Apply Lemma \ref{lem:carving} to the set $A$ and $B$ to find a map $\sigma_A: [0,1] \to \CC(M, \mathbf{F})$ so that $\sup_{t \in [0,1]} \mathbf{F}(\sigma_A(t), A) \leq \eps$. Moreover, let $\gamma_A: [0,1] \to \CC(M, \mathbf{F})$ be the map from Lemma \ref{contractLemma} with $\gamma_A(0) = S_A$ and $\gamma_A(1) = \emptyset$. Similarly apply Lemma \ref{lem:carving} to the sets $B$ and create a map $\sigma_B: [0,1] \to \CC(M, \mathbf{F})$ so that $\sup_{t \in [0,1]} \mathbf{F}(\sigma_B(t), B) \leq \eps$ and Lemma \ref{contractLemma} to get a map $\gamma_B: [0,1] \to \CC(M, \mathbf{F})$. We first ``separate" $A \backslash B$ from $A \cap B$ by using $S_A$. Consider the map
\begin{align*}
\sigma_1&: [0,1] \to \CC(M, \mathbf{F}) \\
\sigma_1(t) &= \sigma_A(t) \sqcup \gamma_B(f(t))
\end{align*}
where $f:[0,1] \to [0,1]$ is chosen continuously so that $f(0) = 1$ and $|\sigma_2(t)| = V_0$. Note that because $\sup_{t \in [0,1]} \mathcal{F}(\sigma_A(t), A) \leq \sup_{t \in [0,1]} \mathbf{F}(\sigma_A(t), A) \leq \eps$, we have that $f$ is well defined and $0 < f(1) < 1$. See Figure \ref{fig:stepone}. 
\begin{figure}[h!]
\centering
\includegraphics[scale=0.7]{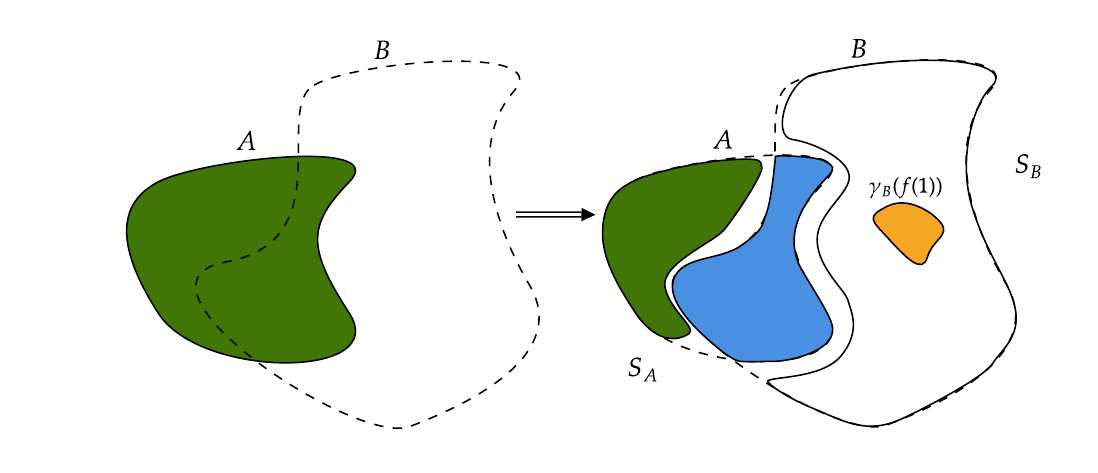}
\caption{$\sigma_1(t)$ visualized}
\label{fig:stepone}
\end{figure}
\nl We now fill in $S_B$ while removing the rest of $S_A$. Consider 
\begin{align*}
\sigma_2&: [0,1] \to \CC(M, \mathbf{F}) \\
\sigma_2(t) &= \begin{cases}
    \sigma_1(2t) & 0 \leq t \leq 1/2 \\
    \sigma_A(1) \backslash \gamma_A(g(t)) \cup \gamma_B(\tilde{f}(t)) & 1/2 \leq t \leq 1 
\end{cases}
\end{align*}
here $g(t)$ is chosen continuously so that $g(1/2) = 1$ and $g$ is monotone non-increasing.
Moreover, $\tilde{f}(t)$ is chosen continuously so that $\tilde{f}(1/2) = f(1)$, $\tilde{f}(1) = 0$, and $\tilde{f}(t)$ is monotone non-decreasing. We can choose both maps so that $|\sigma_2(t)| = V_0$ for all $t$. Note that because $|S_B| < |A \backslash B| = |B \backslash A|$, we must have $g(1) > 0$. See Figure \ref{fig:steptwo}. \nl 
\begin{figure}[h!]
\centering
\includegraphics[scale=0.7]{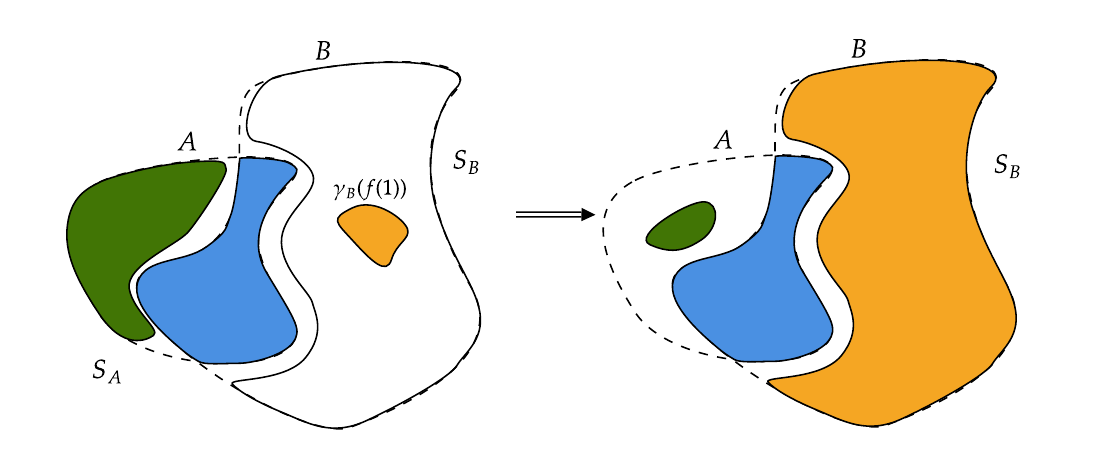}
\caption{$\sigma_2(t)$ visualized}
\label{fig:steptwo}
\end{figure}
\indent Now we finish the map by filling in $B \backslash S_B$ and removing the remainder of $S_A$, i.e. define 
\begin{align*}
\sigma_3&: [0,1] \to \CC(M, \mathbf{F}) \\
\sigma_3(t) &= \begin{cases}
    \sigma_2(2t) & 0 \leq t \leq 1/2 \\
    \sigma_A(1) \backslash \gamma_A(\tilde{g}(t)) \cup \sigma_B(2(1-t)) & 1/2 \leq t \leq 1 
\end{cases}
\end{align*}
Here, $\tilde{g}(1/2) = g(1)$, $\tilde{g}$ is non-increasing, $|\sigma_3(t)| = V_0$ for all $t$ by choice of $\tilde{g}$, and necessarily $\tilde{g}(1) = 0$. See Figure \ref{fig:stepthree}.  This completes the proof.
\begin{figure}[h!]
\centering
\includegraphics[scale=0.7]{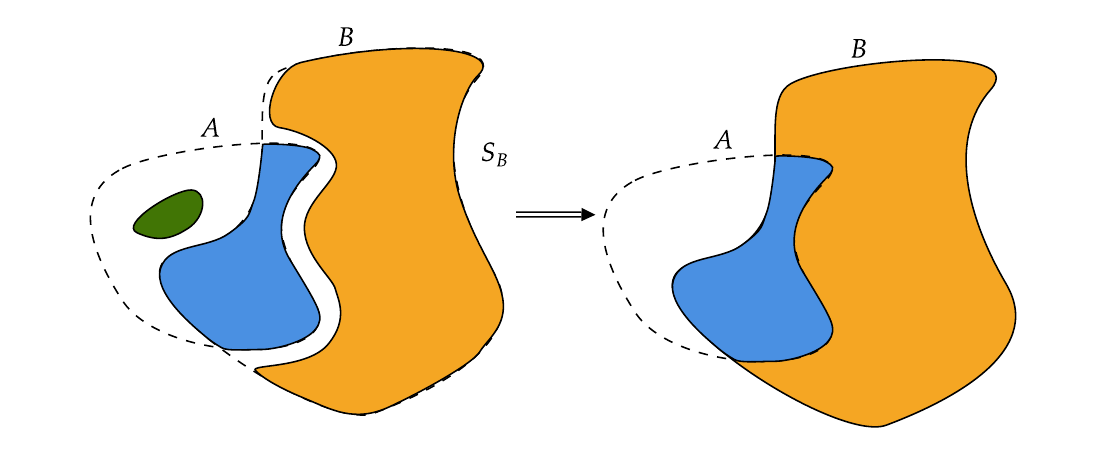}
\caption{$\sigma_3(t)$ visualized}
\label{fig:stepthree}
\end{figure}

\section{Volume Constrained Min-Max for Perimeter}

\subsection{$F^{h,f}$ and $E^f$ functionals}
We recall the $F^{h,f}$ functional introduced by Mazurwoski--Zhou \cite[\S 2, 3]{mazurowski2024infinitely}. Fix a closed, Riemannian manifold $(M, g)$, and $h_0 \in (0, \text{Vol}(M))$. Fix a smooth, non-constant function $f\colon [0,\vol(M)]\to \R$. Also fix a smooth Morse function $h: M\to \R$.  

\begin{definition}[Definition 2.9, \cite{mazurowski2024infinitely}] Given a regular point $x\in M$ for $h$, let $\Gamma(x)$ be the level set of $h$ passing through $x$. Then define $v(h,x)$ to be the vanishing order at $x$ of the mean curvature $H_{\Gamma(x)}$, regarded as a function on $\Gamma(x)$.   
\end{definition} 

\begin{definition}[Definition 2.10, \cite{mazurowski2024infinitely}]
\label{property (T)} Let $h\colon M\to \R$ be a smooth Morse function. We say that $h$ satisfies property (T) provided:
\begin{itemize}
\item[(T)] For every regular point $x$ of $h$, we have $v(h,x) < \infty$. 
\end{itemize}
\end{definition}
In the following, fix a smooth Morse function $h\colon M\to \R$ satisfying property (T). Recall by \cite[Appendix]{mazurowski2024infinitely}, that the set of smooth Morse functions satisfying property (T) is dense in $C^\infty(M)$. 
\begin{definition}[Definition 2.11, \cite{mazurowski2024infinitely}] Define the functional $A^h\colon \CC(M) \to \R$ by 
\[
A^h(\Omega): = \text{Per}(\Omega) - \int_\Omega h. 
\]
Then define the functional $F^{h,f}: \CC(M) \to \R$ by 
\begin{equation} \label{FVolPenFunctional}
F^{h,f}(\Omega) := A^h(\Omega) + f(|\Omega|). 
\end{equation}
In the following, we will sometimes just write $F$ instead of $F^{h,f}$ if the choice of $h$ and $f$ is clear. 

\end{definition}

Assume $\Sigma = \p \Omega$ is a smooth hypersurface in $M$.  Let $\nu$ denote the inward pointing normal vector to $\Sigma$, and let $H$ be the mean curvature of $\Sigma$ with respect to $\nu$.  Let $X$ be a $C^1$ vector field on $M$ and let $\phi_t$ be the associated flow, then
\[
\delta F|_\Omega(X) = \frac{d}{dt}\Big|_{t=0} F(\phi_t(\Omega)) = \int_\Sigma \big(h-H - f'(\vol(\Omega)\big) \langle X, \nu \rangle. 
\]
We also have the following regularity control:
\begin{proposition}[Proposition 2.13, \cite{mazurowski2024infinitely}]\label{prop:touching set}
Assume that $h$ satisfies property $\operatorname{(T)}$. Let $\Sigma$ be an almost-embedded hypersurface with mean curvature $H = h + h_0$ for some constant $h_0$. Then the touching set $S(\Sigma)$ is contained in a countable union of $(n-1)$-dimensional manifolds. 
\end{proposition}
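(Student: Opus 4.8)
The plan is to argue locally and reassemble. As $M$ is second countable, it suffices to cover $S(\Sigma)$ by countably many coordinate balls $B$ in which $\Sigma$ decomposes into finitely many embedded sheets, and to show that in each such $B$, for each pair of touching sheets, the touching locus is contained in a finite union of $(n-1)$-dimensional $C^1$ submanifolds. Fix $p\in S(\Sigma)$ and two sheets $\Sigma_1,\Sigma_2$ through $p$. By almost-embeddedness, $\Sigma_1$ and $\Sigma_2$ are tangent at $p$ and one lies weakly to one side of the other; in coordinates with $T_p\Sigma_1=T_p\Sigma_2=\mathbb{R}^n\times\{0\}$ write $\Sigma_i=\operatorname{graph}(u_i)$, $u_1\ge u_2$, $u_i(0)=0$, $\nabla u_i(0)=0$, so the local touching set is $\{w=0\}$ with $w:=u_1-u_2\ge 0$ and $\nabla w\equiv 0$ on $\{w=0\}$. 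Both sheets have mean curvature $h+h_0$ with respect to their chosen normals $\nu_1,\nu_2$. If $\nu_1(p)=\nu_2(p)$, subtracting the two equations and linearizing the $h$-term in the last variable makes $w$ a solution of a \emph{homogeneous} uniformly elliptic equation $\mathcal L_0 w-c\,w=0$; since $w\ge 0$ and $w(0)=0$ with $0$ interior, the strong maximum principle forces the two sheets to coincide near $p$, a configuration not contributing to the touching set. So assume $\nu_1(p)=-\nu_2(p)$. With $\mathcal M$ the mean curvature operator for the upward normal, $\mathcal M[u_1]=h(\cdot,u_1)+h_0$ and $\mathcal M[u_2]=-(h(\cdot,u_2)+h_0)$; subtracting and using the mean value trick on the left gives a uniformly elliptic linear operator $\mathcal L_0$ with smooth coefficients and a bounded function $c$ with
\[
\mathcal L_0 w-c(x)\,w=f(x),\qquad f(x):=2\big(h(x,u_2(x))+h_0\big),
\]
i.e.\ $f$ is twice the restriction of the prescription function $h+h_0$ to $\Sigma_2$, read on the parameter domain.

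I would then reduce the theorem to the claim that $w$ has \emph{finite vanishing order} at every point of $\{w=0\}\cap B$. Granting this: where $f(q)\neq 0$, since $\nabla w(q)=0$ we get $a^{ij}(q)\,\partial_{ij}w(q)=f(q)\neq 0$, and $D^2w(q)$ being positive semidefinite (a minimum of $w\ge 0$) there is $v$ with $\partial_v(\partial_v w)(q)>0$, so $\{w=0\}\subseteq\{\partial_v w=0\}$ is an $(n-1)$-dimensional $C^1$ manifold near $q$ by the implicit function theorem; where $f(q)=0$, the leading homogeneous Taylor polynomial of $w$ at $q$ is a nonzero non-negative polynomial, hence of even degree with zero cone of dimension $\le n-1$, and the structure theory for zero sets of solutions of second-order linear elliptic equations (in the style of Hardt--Simon, Han--Hardt--Lin, Naber--Valtorta) gives that $\{w=0\}$ is $(n-1)$-rectifiable near $q$ and contained in a countable union of $(n-1)$-manifolds. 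To establish finite vanishing order, note that boundedness of the coefficients forces: if $w$ vanishes to infinite order at $q$, then so does $\mathcal L_0 w-c\,w=f$; hence it suffices to show $f$ has finite vanishing order at every $q\in\{w=0\}\cap B$.

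For this, if $f(q)\neq 0$ there is nothing to prove. If $f(q)=0$ then $h(q)=-h_0$, so $q$ lies on the level set $\Gamma:=\{h=-h_0\}$. If $q$ is a critical point of $h$: $h$ Morse gives $h-h(q)$ vanishing to order exactly $2$, and (as $n\ge 2$) a nondegenerate quadratic form on $T_qM$ restricts nontrivially to the hyperplane $T_q\Sigma_2$, so $f=2(h-h(q))|_{\Sigma_2}$ vanishes to order exactly $2$. If $q$ is a regular point of $h$ (so $\Gamma$ is smooth near $q$) and $T_q\Sigma_2\neq T_q\Gamma$: the gradient of $(h+h_0)|_{\Sigma_2}$ at $q$ is the nonzero projection of $\nabla_M h(q)$ onto $T_q\Sigma_2$, so $f$ vanishes to order exactly $1$. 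The one remaining case, $q$ a regular point of $h$ with $T_q\Sigma_2=T_q\Gamma$, is exactly the situation property (T) addresses: introduce flow-box coordinates $\phi\colon\Gamma\times(-\varepsilon,\varepsilon)\to M$ with $h(\phi(y,s))=-h_0+s$, $\phi(y,0)=y$; since $\Sigma_2$ is tangent to $\Gamma$ at $q$ it is transverse to the flow lines of $\nabla h$ near $q$, so $\Sigma_2=\{\phi(y,\psi(y))\}$ for some $\psi$ with $\psi(q)=0$, $\nabla\psi(q)=0$. Along $\Sigma_2$ the equation says that the mean curvature of the graph $\{\phi(y,\psi(y))\}$, computed with respect to the normal pointing in the $\nabla h$ direction, equals $\pm(h(\phi(y,\psi(y)))+h_0)=\pm\psi(y)$ with constant sign near $q$; but this mean curvature is a smooth quasilinear second-order operator $\mathcal H[\psi]$ with $\mathcal H[0]=H_\Gamma$, so $\pm\psi-H_\Gamma=\mathcal H[\psi]-\mathcal H[0]=G(y,\psi,\nabla\psi,\nabla^2\psi)$ where $G(y,0,0,0)\equiv 0$. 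If $\psi$ vanished to infinite order at $q$, so would $\nabla\psi,\nabla^2\psi$, hence so would $G(y,\psi,\nabla\psi,\nabla^2\psi)=\pm\psi-H_\Gamma$; together with $\psi$ vanishing to infinite order this forces $H_\Gamma$ to vanish to infinite order at $q$ along $\Gamma$, contradicting $v(h,q)<\infty$. So $\psi$, and hence $f$, has finite vanishing order at $q$. Summing over the countably many balls and sheet pairs finishes the argument.

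The main obstacle is this last case, $T_q\Sigma_2=T_q\Gamma$: one must choose coordinates so that the restriction of the prescription function to the sheet becomes the sheet's graph function $\psi$ over the level set, and then turn property (T) into a contradiction via the infinite-order jet computation for the quasilinear mean-curvature operator $\mathcal H$. A secondary point is making the appeal to the structure theory of zero sets precise when $f(q)=0$: one wants $f$ (hence $w$) to have finite and quantitatively controlled vanishing order there, so that the equation is effectively homogeneous at small scales and the usual frequency-function monotonicity applies.
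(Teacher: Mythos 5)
The paper states this proposition but cites it verbatim from \cite{mazurowski2024infinitely} without reproducing a proof, so there is no internal argument to compare against; I therefore evaluate the proposal on its own merits. Your plan correctly isolates the mechanism: locally write the two touching sheets as ordered graphs, set $w = u_1 - u_2 \geq 0$, derive $\mathcal{L}_0 w - c\,w = f$ with $f = 2(h + h_0)\big|_{\Sigma_2}$, note the touching locus is where $w$ and $\nabla w$ vanish, and reduce to controlling the vanishing order of $f$. Your case analysis on $f$ is the right one: Morse nondegeneracy of $h$ at critical points (with the correct observation that a nondegenerate quadratic form on $T_qM$ cannot restrict to zero on a hyperplane when $n\geq 2$), transversality when $T_q\Sigma_2 \neq T_q\Gamma$, and, when $T_q\Sigma_2 = T_q\Gamma$, the flow-box computation $\pm\psi - H_\Gamma = G(y,\psi,\nabla\psi,\nabla^2\psi)$ with $G(y,0,0,0)\equiv 0$, which is exactly how property (T) produces the contradiction with infinite vanishing order of $\psi$. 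This is precisely the role property (T) is designed to play.

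The one step that needs repair is how you conclude in the subcase $f(q)=0$. You appeal to nodal-set structure theory for solutions of second-order linear elliptic equations applied to $w$, but $w$ satisfies an \emph{inhomogeneous} equation $\mathcal{L}_0 w - c\,w = f$ with $f\not\equiv 0$, so results in the style of Hardt--Simon or Han--Hardt--Lin do not apply as stated; the observation about the leading Taylor polynomial of $w$ being nonnegative is true but not what is needed. In fact no elliptic structure theory is required at this stage: the touching points with $f(q)=0$ a priori lie in $\{f=0\}$, and you have already established that $f$ has finite vanishing order at every such point. For a smooth function of finite vanishing order the zero set is contained in a countable union of $(n-1)$-manifolds by an elementary implicit-function-theorem decomposition: at a point of vanishing order $d$ choose $\gamma$ with $|\gamma|=d$ and $\partial^\gamma f\neq 0$, write $\gamma=\alpha+e_i$, and observe that $\{\partial^\alpha f = 0\}$ is an $(n-1)$-manifold near that point and contains the order-$\geq d$ stratum of $\{f=0\}$; summing over $d$, $\alpha$, $i$ and a countable cover gives the claim. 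Replacing the nodal-set appeal by this elementary argument closes the proof, and I expect it aligns with how \cite{mazurowski2024infinitely} actually argue.
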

\noindent The $F$ functional extends to a functional on $\vc(M)$ by 
\[
F(V,\Omega) = \|V\|(M) - \int_\Omega h + f(\vol(\Omega)).
\]
If $\Omega_i\in \CC(M)$ is a sequence with $\Omega_i\to \Omega$ and $\vert \p \Omega_i\vert \to V$ then $F(\Omega_i) \to F(V,\Omega)$.  The first variation of $F$ on $\vc(M)$ is given by
\begin{align*}
\delta F|_{(V,\Omega)}(X) &= \frac{d}{dt}\Big|_{t=0} F((\phi_t)_\sharp (V,\Omega)) \\ &= \delta V(X) - \int_\Omega \div(hX) + f'(\vol(\Omega)) \int_{\Omega}\div(X). 
\end{align*}
where $X$ is a $C^1$ vector field on $M$.  We also define the $E^f$ functional as follows
\begin{align} \label{EVolPenFunctional}
E^f& : \vc(M) \to \R \\ \nonumber
E^f(V, \Omega) &:= F^{f,0}(V, \Omega)
\end{align}
though $h = 0$ is not Morse, the above still makes sense. Note that since this paper considers regions enclosing fixed volume, $h_0 \neq \frac{1}{2} \text{Vol}(M)$, we only need to define $E^f$ on the $\vc(M)$ space, as opposed to the $\vz(M, \Z_2)$ space as done in \cite[\S 2.1]{mazurowski2024infinitely}. 
\subsection{Relative Homotopy Classes} \label{relativeHom}
The following is a summarization of \cite[\S 2.3]{mazurowski2024infinitely}. Let $Z \subset X$ be a cubical subcomplex. Given a continuous map $\Phi_0: X\to (\CC(M),\mathbf F)$, we let $\Pi$ be the collection of all sequences of continuous maps $\{\Phi_i: X\to (\CC(M),\mathbf F)\}$ such that, for each $i$, there exists a flat continuous homotopy map 
\begin{gather*}
H_i\colon X\times [0,1] \to (\CC(M),\mathcal F),\\
H_i(x,0) = \Phi_0(x), \\
H_i(x,1) = \Phi_i(x),
\end{gather*}
and, moreover,
\[
\limsup_{i\to \infty} \left[\sup_{(z,t)\in Z\times [0,1]} \mathbf F(H_i(z,t), \Phi_0(z))\right] \to 0
\]
as $i\to \infty$. 

\begin{definition}
Such a sequence $\{\Phi_i\}_{i\in\N}$ is called an $(X,Z)$-homotopy sequence of mappings into $\CC(M)$, and $\Pi$ is called the $(X,Z)$-homotopy class of $\Phi_0$.
\end{definition}

\begin{definition}
Fix a functional $F: \CC(M)\to \R$. Define the min-max value of $\Pi$ with respect to $F$ by
\[
L^{F}(\Pi) = \inf_{\{\Phi_i\}\in \Pi} \limsup_{i\to\infty}\left[\sup_{x\in X} F(\Phi_i(x))\right]. 
\]
A sequence $\{\Phi_i\}\in \Pi$ is called a critical sequence for $F$ if 
\[
L^{F}(\{\Phi_i\}) := \limsup_{i\to \infty} \left(\sup_{x\in X} F(\Phi_i(x))\right) = L^{F}(\Pi). 
\]
The critical set  $\mathcal K(\{\Phi_i\})$ associated to a critical sequence $\{\Phi_i\}$ is the set of all $(V,\Omega)\in \vc(M)$ such that there exist $x_{i_j}\in X$ with  $\vert \Phi_{i_j}(x_{i_j})\vert \to V$, $\Phi_{i_j}(x_{i_j})\to \Omega$, and $F(\Phi_{i_j}(x_{i_j})) \to L^{F}(\Pi)$.  
\end{definition}
\noindent Note that we may also consider relative homotopy classes of maps $\Phi_0: X \to (\CC_{h_0}(M), \mathbf{F})$, in which case the above definitions remain that same after replacing $\CC(M)$ with $\CC_{h_0}(M)$ everywhere. We may emphasize the volume constraint by writing $\Pi_{h_0}$ instead of $\Pi$ in this case.\nl 
\indent We will consider relative homotopy classes for both the $F^{f,h}$ and $E^f$ functionals. We recall the following min-max theorem from Mazurowski--Zhou \cite{mazurowski2024infinitely}:
\begin{theorem}[Thm 1.10,\cite{mazurowski2024infinitely}]
\label{F-min-max} 
Let $(M^{n+1},g)$ be a closed Riemannian manifold of dimension $3\le n+1\le 7$. Let $f\colon [0,\vol(M)]\to\R$ be an arbitrary smooth function, and let $h\colon M\to \R$ be a smooth Morse function satisfying property $\operatorname{(T)}$; see Definition \ref{property (T)}. 
Define $F\colon \CC(M)\to \R$ by 
\[
F(\Omega) = \mathbf{M}(\p \Omega) - \int_\Omega h + f(\vol(\Omega)).
\]
Let $\Pi$ be the $(X,Z)$-homotopy class of a map $\Phi_0\colon X\to (\mathcal C(M),\mathbf F)$. Assume that 
\[
L^F(\Pi) > \sup_{z\in Z} F(\Phi_0(z)).
\]
Then there exists a smooth, almost-embedded hypersurface $\Sigma = \p \Omega$ satisfying 
\[
L^F(\Pi) = F(\Omega).
\]
The mean curvature of $\Sigma$ is given by $H = h - f'(\vol(\Omega))$, and the touching set of $\Sigma$ is contained in a countable union of $(n-1)$-dimensional manifolds. 
\end{theorem}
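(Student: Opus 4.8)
\medskip
\noindent\textbf{Proof strategy (sketch).}
The plan is to run an Almgren--Pitts-style min-max for the functional $F = F^{h,f}$ on $\CC(M)$, carried out in the $\vc(M)$ space, following the prescribed-mean-curvature min-max scheme of Zhou--Zhu and its adaptation to Caccioppoli sets in \cite{mazurowski2024infinitely}. Since $L^F(\Pi) > \sup_{z \in Z} F(\Phi_0(z))$, I would first fix a critical sequence $\{\Phi_i\} \in \Pi$; its critical set $\mathcal{K}(\{\Phi_i\}) \subset \vc(M)$ is then nonempty and, because the gap is strict, its elements do not all arise from $Z$. The first real step is a \emph{pull-tight}: using the continuity of $X \mapsto \delta F|_{(V,\Omega)}(X)$ on $\vc(M)$ and a deformation along a suitably chosen field, one replaces $\{\Phi_i\}$ by a new critical sequence all of whose critical elements $(V,\Omega)$ are stationary for $F$, i.e. $\delta V(X) = \int_\Omega \div(hX) - f'(\vol(\Omega))\int_\Omega \div X$ for every $C^1$ field $X$. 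Granting $V = |\p\Omega|$ this already says $\p\Omega$ has weak mean curvature $h - f'(\vol(\Omega))$; the outstanding issue is that a priori one only knows $\|\,|\p\Omega|\,\| \le \|V\|$.

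Next is the combinatorial heart of the argument: one shows some $(V,\Omega)$ in the pulled-tight critical set is \emph{$F$-almost minimizing in small annuli}. If this failed, then near every point and at every small enough scale a local competitor surgery would strictly lower $F$, and interpolating these improvements through the Almgren--Pitts isoperimetric and interpolation lemmas (cf. \cite{pitts2014existence}) would produce a sequence in $\Pi$ with strictly smaller $\limsup_i \sup_X F$, contradicting the definition of $L^F(\Pi)$. What makes the classical machinery apply verbatim is that $F(\Omega) = \M(\p\Omega) + \big(-\int_\Omega h + f(|\Omega|)\big)$, where the bulk terms are flat-continuous and of lower order at small scales, so all the surgery and interpolation estimates hold with $F$ in place of mass up to errors vanishing with the scale.

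The decisive and hardest step is regularity for such a $(V,\Omega)$. In a small geodesic annulus one minimizes $F$ among Caccioppoli competitors that agree with $\Omega$ near the inner and outer spheres; since $F$ is mass plus a smooth bulk term, any minimizer is a \emph{stable} hypersurface of prescribed mean curvature $h - f'(\vol)$, hence smooth and embedded for $n+1 \le 7$ by Schoen--Simon-type regularity \cite{schoen1981regularity}, with the CMC/PMC refinements of Zhou--Zhu controlling the behaviour of these \emph{replacements}. Varying the annuli and gluing the replacements forces uniqueness of tangent cones and upgrades $V$ to a smooth, almost-embedded hypersurface with $V = |\p\Omega|$; multiplicity one is automatic wherever $h - f'(\vol(\Omega)) \neq 0$, and on the remaining (minimal) part one invokes the multiplicity-one theorem for one-parameter min-max minimal hypersurfaces. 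The self-touching set of $\Sigma = \p\Omega$ is then tame by Proposition~\ref{prop:touching set}: $H = h + \text{const}$ and property (T) confine $S(\Sigma)$ to a countable union of $(n-1)$-manifolds. Finally, with $V = |\p\Omega|$ the first variation formula $\delta F|_\Omega(X) = \int_\Sigma \big(h - H - f'(\vol(\Omega))\big)\langle X, \nu\rangle$ together with stationarity gives $H = h - f'(\vol(\Omega))$, and $(V,\Omega) \in \mathcal{K}(\{\Phi_i\})$ gives $F(\Omega) = L^F(\Pi)$. I expect this last step --- passing from the almost-minimizing varifold to a genuine, multiplicity-one, almost-embedded hypersurface with controlled touching set --- to be the main obstacle, and it is precisely where the Morse/property-(T) hypothesis on $h$ and the dimension bound $n+1 \le 7$ are used.
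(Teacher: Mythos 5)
This theorem is recalled verbatim from Mazurowski--Zhou \cite[Thm.~1.10]{mazurowski2024infinitely} and is used as a black box in the paper; the paper itself contains no proof of it, so there is no ``paper's own proof'' to compare against. Your sketch is nevertheless a fair reconstruction of the argument in the cited source, which in turn rests on the prescribed-mean-curvature min-max machinery of Zhou--Zhu adapted to the Caccioppoli/$\vc(M)$ setting: pull-tight via the continuous first variation of $F$ on $\vc(M)$, $F$-almost-minimizing in annuli via the Almgren--Pitts combinatorial deformation argument (with the key observation, which you correctly isolate, that $-\int_\Omega h + f(\vol(\Omega))$ is flat-continuous and of lower order at small scales so that mass-based surgery estimates transfer to $F$), and regularity via replacements together with Schoen--Simon compactness for stable PMC hypersurfaces in dimension $n+1\le 7$.

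One small imprecision: you say that on the set where $h - f'(\vol(\Omega))$ vanishes one ``invokes the multiplicity-one theorem for one-parameter min-max minimal hypersurfaces.'' That is not how the source handles it. Because $h$ is Morse and the shift $f'(\vol(\Omega))$ is a constant, the prescribed mean curvature $h - f'(\vol(\Omega))$ is a Morse function on $M$ shifted by a constant, so its zero set is a regular level set of $h$ (a genuine hypersurface, not an open region). The Zhou--Zhu good replacement argument for PMC surfaces already yields multiplicity one everywhere under this genericity; no appeal to a separate minimal multiplicity-one theorem is made or needed. Property~(T) is used at a different juncture, as you note at the end --- to control the touching set via Proposition~\ref{prop:touching set}. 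With that caveat, the outline is consistent with the proof strategy of \cite{mazurowski2024infinitely}.
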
 
We will apply theorem \ref{F-min-max} in the setting of $(X, Z) = ([0,1], \{0,1\})$.
\subsection{Proof of Theorem \ref{MPassThm}}
To begin the proof of Theorem \ref{MPassThm}, we recall some prior work done on strictly stable c-CMC surfaces which bound a Caccioppoli set with fixed volume. For $\Omega \in \CC(M)$ with $\partial \Omega = \Sigma$ a smooth, closed hypersurface, we define 
\[
J_{\Sigma}(\phi) = (\Delta_{\Sigma} + [|A_{\Sigma}|^2 + \Ric_g(\nu, \nu)] )(\phi)
\]
to be the Jacobi operator. Along a family of closed hypersurfaces $\{\Sigma_t\}$ such that $\Sigma_0 = \Sigma$, we have
\[
\frac{d^2}{dt^2} A(\Sigma_t) \Big|_{t = 0} = \int_{\Sigma} - \phi \cdot J_{\Sigma}(\phi) = \int_{\Sigma} |\n \phi|^2 - [|A_{\Sigma}|^2 + \Ric_g(\nu, \nu)]\phi^2 = Q(\phi, \phi)
\]
where $\phi: \Sigma \to \R$ gives the infinitesimal variation of $\Sigma$ at $t = 0$. We are interested in the class of variations which preserve volume, i.e. $\int_{\Sigma} \phi = 0$. Denote this subset of $C^{\infty}(\Sigma)$ by $T_0(\Sigma)$.
\begin{definition}
$\Sigma$ is a strictly stable constant mean curvature surface, if $H_{\Sigma} \equiv c \in \R$, and $\Sigma$ is strictly stable with respect to volume preserving deformations, i.e. 
\[
\phi \in T_0(\Sigma) \backslash \{0\}, \qquad Q(\phi, \phi) > 0
\]
\end{definition}
\noindent By considering the eigenfunctions of $J_{\Sigma}$ on the orthogonal complement of the function $f = 1$, strict stability with respect to volume preserving deformations is equivalent to 
\[
Q(\phi, \phi) \geq C_{\Sigma} \phi^2
\]
for some $C_{\Sigma} > 0$ and $\phi \in T_0(\Sigma)$. We now recall the \textit{quantitative strict stability} proved by Chodosh--Engelstein--Spolaor \cite{chodosh2022riemannian}
\begin{lemma}[Thm 1.4 \cite{chodosh2022riemannian}] \label{QuantIso}
Let $(M^n, g)$ a smooth closed manifold and $\Omega$ a Caccioppoli set with $\text{Vol}(\Omega) = V_0$. Suppose that $\Sigma = \p \Omega$ is a strictly stable c-CMC surface. There exists constants $C, \delta > 0$ (depending only on $M, g, \Sigma$) such that for all other $E \in \CC(M)$ with $|\Omega \Delta E| \leq \delta$ and $\text{Vol}(E) = V_0$, then
\[
\text{Per}(E) - \text{Per}(\Omega) \geq C_0 |E \Delta \Omega|^2.
\]
\end{lemma}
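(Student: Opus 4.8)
The plan is to follow the standard two-step strategy for quantitative isoperimetric-type inequalities: a Fuglede-type second-variation estimate for boundaries that are normal graphs over $\Sigma$, combined with a selection (or ``penalization'') principle in the spirit of Cicalese--Leonardi that reduces a general competitor to the graphical case.

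First, suppose $\partial E$ is a normal graph over $\Sigma$, i.e. $\partial E = \{\exp_x(u(x)\nu(x)) : x\in\Sigma\}$ with $\|u\|_{C^1(\Sigma)}$ small. Expanding the perimeter in Fermi coordinates about $\Sigma$ gives
\[
\Per(E) = \Per(\Omega) + \int_\Sigma H_\Sigma\, u + \tfrac12 Q(u,u) + o\!\left(\|u\|_{H^1(\Sigma)}^2\right),
\]
and the constraint $|E| = |\Omega| = V_0$ forces $\int_\Sigma u = -\tfrac12\int_\Sigma H_\Sigma\, u^2 + o(\|u\|_{L^2}^2) = O(\|u\|_{L^2(\Sigma)}^2)$. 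Since $H_\Sigma\equiv c$, the linear term $c\int_\Sigma u$ is likewise $O(\|u\|_{L^2}^2)$. Write $u = \bar u + w$ with $\bar u$ the average of $u$ and $w\in T_0(\Sigma)$; then $|\bar u| = O(\|u\|_{L^2}^2)$, and strict stability, upgraded by the usual interpolation to $H^1$-coercivity on $T_0(\Sigma)$, gives $Q(w,w)\geq c_0\|w\|_{H^1(\Sigma)}^2$, hence
\[
Q(u,u) \geq c_0\|w\|_{H^1(\Sigma)}^2 + O(\|u\|_{L^2}^3) \geq \tfrac12 c_0\|u\|_{H^1(\Sigma)}^2
\]
once $\|u\|_{C^1}$ is small. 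Therefore $\Per(E)-\Per(\Omega)\geq \tfrac14 c_0\|u\|_{H^1}^2 \geq \tfrac14 c_0\|u\|_{L^2}^2 \geq \tfrac{c_0}{4|\Sigma|}\|u\|_{L^1(\Sigma)}^2$, and since $|E\Delta\Omega| = \int_\Sigma|u| + o(\|u\|_{L^1})$, we conclude $\Per(E)-\Per(\Omega)\geq c_1\,|E\Delta\Omega|^2$ for graphical $E$.

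Next, I remove the graphical hypothesis by contradiction. If the lemma failed, there would be $E_j\in\CC(M)$ with $\varepsilon_j := |E_j\Delta\Omega|\to 0$, $|E_j| = V_0$, and $\Per(E_j)-\Per(\Omega) < \tfrac1j\varepsilon_j^2$. Replace $E_j$ by a minimizer $F_j$ of the penalized functional
\[
F \longmapsto \Per(F) + \Lambda\,\big|\,|F\Delta\Omega| - \varepsilon_j\,\big| + \Lambda\,\big|\,|F| - V_0\,\big|,
\]
with $\Lambda$ a large fixed constant (exceeding the local Lipschitz constant of $\Per$ near $\Omega$). Then $F_j$ is a $\Lambda$-almost-minimizer of perimeter with $|F_j\Delta\Omega| = \varepsilon_j$, $|F_j| = V_0$, and still $\Per(F_j)-\Per(\Omega) = o(\varepsilon_j^2)$. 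Standard regularity for almost-minimizers forces $\partial F_j\to\Sigma$ in $C^{1,\alpha}$, so for $j$ large $\partial F_j$ is the normal graph of some $u_j$ with $\|u_j\|_{C^1}\to 0$ and $\|u_j\|_{L^1}\sim\varepsilon_j$; the graphical case applied to $F_j$ gives $\Per(F_j)-\Per(\Omega)\geq c_1\varepsilon_j^2$, contradicting $\Per(F_j)-\Per(\Omega) = o(\varepsilon_j^2)$.

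The main obstacle is the selection/regularity step: one must show the penalized minimizers $F_j$ are \emph{uniform} almost-minimizers so that $\varepsilon$-regularity and Allard-type estimates apply with $j$-independent constants, and that $\partial F_j\to\Sigma$ genuinely in $C^{1,\alpha}$ rather than merely in $L^1$ or as varifolds with multiplicity. This uses strict stability of $\Sigma$ a second time, through a compactness argument: any subsequential $L^1$-limit of the $F_j$ is a volume-constrained perimeter minimizer in a small $L^1$-neighborhood of $\Omega$, and strict stability forces it to be $\Omega$ itself, after which monotonicity and regularity upgrade the convergence to $C^{1,\alpha}$. By contrast, the Fuglede expansion is routine once the competitor is known to be a small $C^1$ graph over $\Sigma$.
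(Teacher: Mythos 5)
The paper does not prove this lemma---it cites \cite[Thm 1.4]{chodosh2022riemannian} directly, noting that the strictly stable sub-case requires only $g\in C^3$ via their Lemma 3.3(ii), and referencing \cite{morgan2010stable, inauen2018quantitative} as similar work. So there is no in-paper proof to compare against. Your proposal is the standard Fuglede-plus-selection route in the spirit of Cicalese--Leonardi and Inauen--Marchese; the full CES theorem is proved via a \L{}ojasiewicz--Simon gradient inequality (hence the analyticity hypothesis), but in the strictly stable case---which is what is actually being used here---the approaches essentially coincide, and your reconstruction is a legitimate alternative.

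There is, however, a gap in your Fuglede step. You expand $\Per(E) = \Per(\Omega) + \int_\Sigma H_\Sigma u + \tfrac12 Q(u,u) + o(\|u\|_{H^1}^2)$ and then note that both $\int_\Sigma u$ and hence $c\int_\Sigma u$ are $O(\|u\|_{L^2}^2)$. But that is exactly the same order as the coercive term $\tfrac12Q(u,u)\gtrsim \|u\|_{H^1}^2 \geq \|u\|_{L^2}^2$, and its sign and constant are uncontrolled; the conclusion ``$\Per(E)-\Per(\Omega)\geq \tfrac14 c_0\|u\|_{H^1}^2$'' does not follow from the two displayed inequalities. Moreover, since $\Sigma$ is not critical for $\Per$ alone, the quadratic part of the expansion of $\Per$ along a geodesic normal graph is not $Q(u,u)$ but $Q(u,u) + c^2\|u\|_{L^2(\Sigma)}^2$. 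The term you dropped is precisely what cancels the $c\int_\Sigma u = -\tfrac{c^2}{2}\|u\|_{L^2}^2 + o(\|u\|_{L^2}^2)$ forced by the volume constraint. The cleanest fix is to expand the critical functional $\Per - c\,\Vol$ rather than $\Per$: its first variation vanishes at $\Sigma$, its second variation is exactly $Q$, and the difference $\Per(E)-\Per(\Omega)$ equals the change in $\Per - c\,\Vol$ because $|E|=|\Omega|$. Once stated that way, your coercivity argument goes through. A smaller point: in the selection step you assert that the penalized minimizers $F_j$ satisfy $|F_j\Delta\Omega|=\varepsilon_j$ and $|F_j|=V_0$ exactly; with $\Lambda$ large this is true, but it requires a short comparison argument (that either constraint failing would allow a competitor with strictly smaller penalized energy) rather than being automatic. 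The remainder of your selection argument, and your identification of the $C^{1,\alpha}$-convergence of almost-minimizers as the technical crux, is accurate.
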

%
%
\noindent The above theorem is a subset of the fully stated \cite[Thm 1.4]{chodosh2022riemannian}. While the original theorem is stated for $g$ analytic, the reader may verify that the subcase of when $\p \Omega = \Sigma$ is strictly stable only requires $g \in C^3$ (see \cite[Lemma 3.3, (ii)]{chodosh2022riemannian}).  We also remark similar work done by \cite{morgan2010stable} Morgan--Ros and \cite{inauen2018quantitative} Inauen--Marchese. We note that the Baire symmetric differenece is the same as the flat norm on Cacciopoli sets as noted in \S \ref{GMTBackground}.
\subsubsection{Existence of a Mountainpass}
As in the premise of Theorem \ref{MPassThm}, suppose $\Omega_1, \Omega_2 \in \CC_{V_0}^{\infty}(M)$ distinct with $V_0 \in (0, \text{Vol}(M))$. Further suppose $\mathbf{M}(\p \Omega_1) \geq \mathbf{M}(\p \Omega_2)$ and $\p \Omega_1$ is a strictly stable, c-CMC hypersurface with respect to volume preserving deformations (note that $\Omega_1$ is automatically smooth in these dimensions by nature of being a strictly stable CMC hypersurface). By Theorem \ref{pathConnectedThm}, there exists a path $\sigma: [0,1] \to \CC_{V_0}(M)$ with $\sigma(0) = \Omega_1, \sigma(1) = \Omega_2$. Let $\Pi_{V_0} = \Pi(\sigma)$, be the corresponding homotopy class relative to $Z = \{0,1\} \subseteq X = [0,1]$. \nl  
\indent Let $h$ be a Morse function, $h$, satisfying property $\operatorname{(T)}$. We prove the below proposition
\begin{proposition} \label{fixedVolMPIneq}
There exist $\tau, \eps_0 > 0$ (both independent of $k$), so that for all $0 \leq \eps < \eps_0$, we have 
\[
L_{\eps}(\Pi_{V_0}) > \max (F_{\eps h}(\Omega_1), F_{\eps h}(\Omega_2)) + \tau
\]
\end{proposition}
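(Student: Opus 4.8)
The plan is to prove first the ``unperturbed'' perimeter mountain pass inequality --- that every path in $\CC_{V_0}(M)$ joining $\Omega_1$ to $\Omega_2$ must somewhere have perimeter at least $\text{Per}(\Omega_1)$ plus a fixed amount --- and then to absorb the term $-\eps\int_\Omega h$ and the volume penalty as uniformly controlled corrections. Set $C_1 := \|h\|_{L^\infty(M)}\,\text{Vol}(M)$, so that $\bigl|\eps\int_\Omega h\bigr|\le \eps C_1$ for every $\Omega$; and note that along any path contained in $\CC_{V_0}(M)$ the volume penalty equals the constant $f(V_0)$, which will cancel between the two sides of the claimed inequality. Thus it suffices to produce a $k$-independent gap for $\text{Per}$ itself.

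First I would observe that $\Omega_1$ and $\Omega_2$ lie a definite $L^1$-distance apart. Let $C_0,\delta>0$ be the constants of Lemma \ref{QuantIso} for the strictly stable $c$-CMC set $\Omega_1$. Were $|\Omega_1\Delta\Omega_2|\le\delta$, then applying Lemma \ref{QuantIso} with $E=\Omega_2$ (legitimate since $\text{Vol}(\Omega_2)=V_0$) would give $\text{Per}(\Omega_2)-\text{Per}(\Omega_1)\ge C_0|\Omega_1\Delta\Omega_2|^2>0$, as $\Omega_1\ne\Omega_2$; this contradicts the hypothesis $\mathbf{M}(\p\Omega_1)\ge\mathbf{M}(\p\Omega_2)$. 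Hence $|\Omega_1\Delta\Omega_2|>\delta$.

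Next I would fix an arbitrary competitor sequence $\{\Phi_i\}\in\Pi_{V_0}$. The $Z$-control in the definition of $\Pi_{V_0}$ (with $\Phi_0=\sigma$, $\sigma(0)=\Omega_1$, $\sigma(1)=\Omega_2$) forces $\mathbf{F}(\Phi_i(0),\Omega_1)\to0$ and $\mathbf{F}(\Phi_i(1),\Omega_2)\to0$, hence $|\Phi_i(0)\Delta\Omega_1|\to0$ and $|\Phi_i(1)\Delta\Omega_1|\to|\Omega_2\Delta\Omega_1|>\delta$. Since $x\mapsto|\Phi_i(x)\Delta\Omega_1|$ is continuous ($\mathbf{F}$-continuity of $\Phi_i$ implies $L^1$-continuity of $x\mapsto\chi_{\Phi_i(x)}$), the intermediate value theorem yields, for all large $i$, some $x_i$ with $|\Phi_i(x_i)\Delta\Omega_1|=\delta/2\le\delta$; as $\Phi_i(x_i)\in\CC_{V_0}(M)$, Lemma \ref{QuantIso} gives $\text{Per}(\Phi_i(x_i))\ge\text{Per}(\Omega_1)+C_0\delta^2/4$. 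Taking $\sup$ over $x$, then $\limsup$ over $i$, then the infimum over competitor sequences yields
\[
L_\eps(\Pi_{V_0})\;\ge\;\text{Per}(\Omega_1)+\frac{C_0\delta^2}{4}-\eps C_1+f(V_0),
\]
while $\max\bigl(F_{\eps h}(\Omega_1),F_{\eps h}(\Omega_2)\bigr)\le\text{Per}(\Omega_1)+\eps C_1+f(V_0)$, using $\mathbf{M}(\p\Omega_1)\ge\mathbf{M}(\p\Omega_2)$ for the $\Omega_2$ term. The difference is at least $C_0\delta^2/4-2\eps C_1$, so I would finish by setting $\tau:=C_0\delta^2/8$ and $\eps_0:=C_0\delta^2/(16C_1)$ --- both depending only on $M,g,\Omega_1$ and $\|h\|_{L^\infty(M)}$, hence independent of $k$ --- whence the difference exceeds $\tau$ for every $0\le\eps<\eps_0$.

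The step I expect to require the most care is the passage from ``each individual path bumps up'' to ``the min-max value bumps up'': one must confirm that the $Z$-control really pins the endpoints of each $\Phi_i$ near $\Omega_1,\Omega_2$ in a topology ($\mathbf{F}$, hence flat) strong enough to run the intermediate value argument, and --- crucially for invoking Lemma \ref{QuantIso} --- that the crossing point $\Phi_i(x_i)$ remains in $\CC_{V_0}(M)$, which is automatic here because $\Pi_{V_0}$ consists of maps into $\CC_{V_0}(M)$. If one instead worked with a homotopy class permitting the enclosed volume to drift, one would additionally need the penalty $f=f_k$ to confine minimizing sequences near $\{\text{Vol}=V_0\}$ with constants not degenerating as $k$ varies; over $\Pi_{V_0}$ this does not arise, and the uniformity in $k$ is manifest since all $k$-dependence sits in the single constant $f_k(V_0)$, which cancels.
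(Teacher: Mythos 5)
Your proof is correct and follows essentially the same approach as the paper's: apply the quantitative stability of Lemma \ref{QuantIso} along any competing path at a point chosen (via the intermediate value theorem) at definite $\mathcal{F}$-distance from $\Omega_1$, then absorb the $\eps\int h$ correction. You add two small refinements --- deducing $|\Omega_1\Delta\Omega_2|>\delta$ from the hypotheses rather than taking $\eta=\min(\mathcal{F}(\Omega_1,\Omega_2)/2,\delta)$ as the paper does, and arguing at the level of competitor sequences $\{\Phi_i\}$ via the $Z$-control rather than a single fixed path $\gamma$, which treats the infimum in the definition of $L_\eps$ more explicitly.
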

\begin{proof}
We have that 
\begin{align*}
F_{\eps h}(\Omega) &= \Per(\Omega) - \eps \int_{\Omega} h  \\
|F_{\eps h}(\Omega_1)| & \in [\Per(\Omega_1) - \eps ||h||_{L^1(M)}, \Per(\Omega_1) + \eps ||h||_{L^1(M)}] \\
|F_{\eps h}(\Omega_2)| & \in [\Per(\Omega_2) - \eps ||h||_{L^1(M)}, \Per(\Omega_2) + \eps ||h||_{L^1(M)}]  \\
\implies \max (F_{\eps h}(\Omega_1), F_{\eps h}(\Omega_2)) & \leq \Per(\Omega_1) + \eps ||h||_{L^1(M)}
\end{align*}
Consider a path in $\CC_{V_0}(M)$, $\gamma: \Omega_1 \to \Omega_2$, which we know exists by Theorem \ref{pathConnectedThm}. Recall the values $C_0$, $\delta$ from Lemma \ref{QuantIso} and choose $\eta = \min\left(\mathcal{F}(\Omega_1, \Omega_2) / 2, \delta \right)$. Let $t$ be the first time such that 
\[
\mathcal{F}(\gamma(t) , \Omega_1) = \eta
\]
which must exist as $\gamma$ is a continuous path (w.r.t. $\mathbf{F}$ and hence w.r.t. $\mathcal{F}$ as well) from $\Omega_1$ to $\Omega_2$. Then
\begin{align*}
F_{\eps h}(\gamma(t)) & = \Per(\gamma(t)) + \eps \int_{\gamma(t)} h \\
F_{\eps h}(\gamma(t)) &\geq \Per(\gamma(t)) - \eps ||h||_{L^1 } \\
& \geq \Per(\Omega_1) - \eps ||h||_{L^1} + C_0 \eta^2 \\
& \geq \max (F_{\eps h}(\Omega_1), F_{\eps h}(\Omega_2)) + [C_0 \eta^2 - 3 \eps ||h||_{L^1}]
\end{align*}
Choosing $\eps$ sufficiently small and setting $\tau = C_0 \eta^2/2$ completes the proof.
\end{proof}
\noindent Let $f_k(x) = k (x - V_0)^2$, and consider $E_k(\Omega) = \Per(\Omega) + f_k(|\Omega|)$ as in equation \eqref{EVolPenFunctional}. As in Mazurowski--Zhou \cite[\S 6.2]{mazurowski2024infinitely}, we now show that the corresponding $L^k$-values converge, when we allow for paths into all Cacciopoli sets. Let $\Pi = \Pi(\sigma)$ be the homotopy class of paths into $\CC(M)$ with respect to $Z = \{0,1\} \subseteq X = [0,1]$. Let $L^{k}(\Pi)$ be the corresponding min-max value for $E^k$, and let $L_0(\Pi_{V_0})$ denote the same min-max value in proposition \ref{fixedVolMPIneq} for $\eps = 0$.
\begin{proposition} \label{limitLVals}
For $\Pi$ the homotopy class defined above
\[
\lim_{k \to \infty} L^k(\Pi) = L_0(\Pi_{V_0})
\]
\end{proposition}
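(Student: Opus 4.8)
The plan is to establish the two inequalities $\limsup_{k\to\infty} L^k(\Pi) \le L_0(\Pi_{V_0})$ and $\liminf_{k\to\infty} L^k(\Pi) \ge L_0(\Pi_{V_0})$ separately, following the scheme of Mazurowski--Zhou \cite[\S 6.2]{mazurowski2024infinitely} adapted from the half-volume setting to the fixed-volume $V_0$ setting. For the upper bound, I would take a near-optimal path $\gamma \colon [0,1] \to \CC_{V_0}(M)$ representing $\Pi_{V_0}$ with $\sup_t \Per(\gamma(t)) \le L_0(\Pi_{V_0}) + \eta$. Since $|\gamma(t)| = V_0$ for all $t$, the penalty term $f_k(|\gamma(t)|) = k(|\gamma(t)|-V_0)^2$ vanishes identically along this path, so $E_k(\gamma(t)) = \Per(\gamma(t))$ for every $k$. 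As $\gamma$ is in particular a valid competitor path in $\CC(M)$ joining $\Omega_1$ to $\Omega_2$ (and the endpoints lie in $Z$ with the boundary estimate trivially satisfied since the path is constant in $k$), it represents $\Pi$, giving $L^k(\Pi) \le \sup_t E_k(\gamma(t)) = \sup_t \Per(\gamma(t)) \le L_0(\Pi_{V_0}) + \eta$ for all $k$. Letting $\eta \to 0$ yields $\limsup_k L^k(\Pi) \le L_0(\Pi_{V_0})$.

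For the lower bound, fix $\varepsilon > 0$ and, for each $k$, choose a path $\gamma_k \colon [0,1] \to \CC(M)$ representing $\Pi$ with $\sup_t E_k(\gamma_k(t)) \le L^k(\Pi) + \varepsilon$. The key observation is that $E_k(\gamma_k(t)) \ge k(|\gamma_k(t)| - V_0)^2$, so if $M_0 := \sup_k L^k(\Pi) < \infty$ (which follows from the already-established upper bound), then $\big||\gamma_k(t)| - V_0\big| \le \sqrt{(M_0+\varepsilon)/k} \to 0$ uniformly in $t$. Thus the volumes along $\gamma_k$ converge uniformly to $V_0$. The idea is then to push each $\gamma_k$ to a genuinely volume-$V_0$ path $\tilde\gamma_k$ by a small $\mathbf F$-continuous correction — e.g. adding or subtracting a small ball (or a thin shell near the boundary, chosen to remain $\mathbf F$-continuous as in the discussion following Theorem \ref{pathConnectedThm} and the mechanisms of Lemma \ref{contractLemma}) whose volume is exactly $|\gamma_k(t)| - V_0$. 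Such a correction changes the perimeter by at most $C \cdot ||\gamma_k(t)|-V_0|^{n/(n+1)} = o(1)$ uniformly, so $\sup_t \Per(\tilde\gamma_k(t)) \le L^k(\Pi) + \varepsilon + o(1)$. One must also check that the endpoints are fixed at $\Omega_1, \Omega_2$ (the correction can be taken to vanish there, since $|\Omega_1| = |\Omega_2| = V_0$ already) and that $\tilde\gamma_k$ is in the class $\Pi_{V_0}$; this is where the relative homotopy bookkeeping from \S\ref{relativeHom} and Theorem \ref{pathConnectedThm} enters. Then $L_0(\Pi_{V_0}) \le \sup_t \Per(\tilde\gamma_k(t)) \le L^k(\Pi) + \varepsilon + o(1)$, and taking $k \to \infty$ then $\varepsilon \to 0$ gives $L_0(\Pi_{V_0}) \le \liminf_k L^k(\Pi)$.

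The main obstacle I anticipate is the lower-bound step: constructing the volume-correcting deformation $\gamma_k \rightsquigarrow \tilde\gamma_k$ so that it is simultaneously (i) $\mathbf F$-continuous in $t$, (ii) uniformly small in perimeter cost, (iii) compatible with the homotopy-class constraint relative to $Z = \{0,1\}$, and (iv) robust to the fact that $\gamma_k(t)$ need not have smooth boundary. For a general Caccioppoli set one cannot literally "attach a ball" in an $\mathbf F$-continuous family without care — this is precisely the subtlety flagged in the paragraph after Theorem \ref{pathConnectedThm} (removing small balls badly is discontinuous in $\mathbf F$). The cleanest route is probably to first approximate each $\gamma_k$ in $\mathbf F$ by a path of smooth Caccioppoli sets (with a negligible change in $E_k$ and preserving the endpoints, since $\Omega_1, \Omega_2$ are already smooth) using the density/regularization results underlying the Mazurowski--Zhou machinery, and only then apply a smooth, graphical normal-perturbation volume correction as in the proof of Lemma \ref{genericTransverse}, whose $\mathbf F$-continuity and small perimeter cost are transparent. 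Handling the uniformity of all these estimates in $t$ and $k$ at once is the part that requires genuine care; everything else is soft.
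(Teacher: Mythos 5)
Your overall architecture matches the paper's: the upper bound $\limsup_k L^k(\Pi)\le L_0(\Pi_{V_0})$ is immediate from restricting to volume-$V_0$ competitors, and the real content is the lower bound, which requires pushing a near-optimal path for $E_k$ (whose volume is only $O(k^{-1/2})$-close to $V_0$) to a genuine $\CC_{V_0}(M)$-valued path with small perimeter cost. You correctly flag the difficulty — you cannot naively attach or carve balls in an $\mathbf F$-continuous family — but your proposed resolution leaves the key step unresolved.

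The paper closes the gap with two specific black boxes in a specific order, and the order matters. First it applies the Mazurowski--Zhou deformation retract $\theta\colon \CC(M)\times[0,1]\to \CC_{V_0}(M)$ (paper's Lemma~\ref{deformLem}, adapted from \cite[Lemma 6.6]{mazurowski2024infinitely}): this works on \emph{arbitrary} Caccioppoli sets, is $\mathcal F$-continuous, fixes the endpoints, and comes with a quantitative perimeter bound $\Per(\theta(\Omega,1))\le\Per(\Omega)+w(||\Omega|-V_0|)$ via a modulus $w$ with $w(0)=0$. Only \emph{then} does the paper invoke the discretization/interpolation theorems of Zhou \cite[Thms~1.11,~1.12]{zhou2020multiplicity} to upgrade the resulting $\mathcal F$-continuous, volume-$V_0$ path to an $\mathbf F$-continuous one with $\eps$-controlled perimeter and volume and pinned endpoints (with a small modification to preserve the endpoint condition, which the paper spells out). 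Your proposal reverses this order — smooth first, then do a graphical normal perturbation to fix volume — and this is where it breaks. After the approximation step, the sets along the path are still general Caccioppoli sets, not graphs over smooth hypersurfaces, so there is no clear meaning to a ``graphical normal perturbation'' uniformly in $t$. Moreover, your perimeter estimate $C|\Delta V|^{n/(n+1)}$ comes from attaching an isoperimetric ball at a single set, and making this $\mathbf F$-continuous along a path is precisely the failure mode illustrated by the $B_1\setminus B_{1-t}$ example after Theorem~\ref{pathConnectedThm}. The deformation retract $\theta$, not an ad hoc ball/shell argument, is what makes this step rigorous, and it must come before interpolation, not after, since it is only $\mathcal F$-continuous. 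Finally, the ``density/regularization'' you gesture at is not soft background machinery — the Zhou interpolation theorem is exactly what controls the perimeter of the $\mathbf F$-continuous approximant and is a substantial ingredient; without citing it explicitly, that step of your argument also has a gap.

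One minor structural difference: you phrase the lower bound as a direct $\liminf$ estimate, while the paper argues by contradiction (assume $\liminf_k L^k(\Pi) < L_0(\Pi_{V_0})$, extract competitor paths $\Psi_k$, deform, interpolate, contradict). These are logically equivalent; the contradiction framing matches how the $E_k$-bound translates into a uniform volume bound $\sup_t\big||\Psi_k(t)|-V_0\big| \le \sqrt{L_0(\Pi_{V_0})/k}$ that feeds directly into the modulus $w$.
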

\noindent \begin{rmk}
As a consequence of this proposition, we see that 
\[
L^k(\Pi) > \max (E^k(\Omega_1), E^k(\Omega_2)) + \tau_0
\]
for some $\tau_0 > 0$ independent of $k$ for all $k$ sufficiently large.
\end{rmk}
\begin{proof}[Proof of proposition \ref{limitLVals}]
By considering the fixed volume paths from $\Omega_1 \to \Omega_2$ (again, which exist by Theorem \ref{pathConnectedThm}), we clearly have 
\[
L^k(\Pi) \leq L_0(\Pi_{V_0})
\]
for all $k$, so suppose for contradiction that 
\[
\liminf_{k \to \infty} L^k(\Pi) < L_0(\Pi_{V_0}).
\]
After passing to a subsequence, we can find maps $\Psi_k: [0,1] \to \CC(M)$ 
homotopic to $\sigma$, so that 
\[
\sup_{t \in [0,1]} E_k(\Psi_k(t)) \leq L_0(\Pi_{V_0}) - \eta
\]
for all $k$ sufficiently large and some $\eta > 0$ fixed. In particular, this tells us that 
\[
\sup_{t \in [0,1]} \Per(\Psi_k(t)) \leq L_0(\Pi_{V_0}) - \eta, \qquad \sup_{t \in [0,1]} \Big| |\Psi_k(t)| - V_0 \Big| \leq \sqrt{\frac{L_0(\Pi_{V_0})}{k}}
\]
We have the following straightforward adaptation of Mazurowski--Zhou \cite[Lemma 6.6]{mazurowski2024infinitely} (see also Proposition 5 and 6 in \cite{mazurowski2025half}) to the non half-volume setting:
\begin{lemma}[Lemma 6.6 \cite{mazurowski2024infinitely}] \label{deformLem}
There exists a deformation retract 
\[
\theta: \CC(M) \times [0,1] \to \CC_{V_0}(M)
\]
which is continuous in the $\mathcal{F}$-topology. Moreover, there is a continuous function $w: [0, \infty) \to [0, \infty)$ such that $w(0) = 0$ and 
\[
\Per(\theta(\Omega, 1)) \leq \Per(\Omega) + w(| |\Omega| - V_0)
\]
for any $\Omega \in \CC(M)$.
\end{lemma}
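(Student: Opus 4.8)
The plan is to reproduce the proof of \cite[Lemma 6.6]{mazurowski2024infinitely} (cf.\ also \cite[Propositions 5 and 6]{mazurowski2025half}), which is written there for the half--volume target $V_0 = \tfrac12\vol(M)$, and to check that the argument uses only $0 < V_0 < \vol(M)$. First I would fix, once and for all, a finite family of pairwise disjoint closed geodesic balls $\{B_i\}_{i=1}^N$ in $M$ whose total volume $\sum_i \vol(B_i)$ exceeds $\max\!\big(V_0,\ \vol(M)-V_0\big)$; such a family exists because small balls can be packed to cover all but an arbitrarily small fraction of $M$. For $\Omega\in\CC(M)$ the quantities $v_i(\Omega):=\vol(\Omega\cap B_i)$ are $\mathcal F$-continuous in $\Omega$, and the choice of $N$ guarantees that whenever $\vol(\Omega)>V_0$ (resp.\ $<V_0$) there is at least $\vol(\Omega)-V_0$ of $\Omega$ sitting inside $\bigcup_i B_i$ available to excise (resp.\ at least $V_0-\vol(\Omega)$ of room inside $\bigcup_i B_i$ available to fill).

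Next I would build $\theta(\Omega,\cdot)$ as a concatenation of two $\mathcal F$-continuous homotopies, both supported in $\bigcup_i B_i$ and both acting as the identity when $\vol(\Omega)=V_0$. The first homotopy replaces, inside each $B_i$, the slice $\Omega\cap B_i$ by the concentric geodesic ball $B_{r_i(\Omega)}(p_i)$ of the \emph{same} volume $v_i(\Omega)$ (for instance by first inflating $\Omega\cap B_i$ to all of $B_i$ and then deflating to $B_{r_i(\Omega)}(p_i)$, where the radius $r_i(\Omega)$ depends continuously on $v_i(\Omega)$ and hence on $\Omega$); this returns the total volume to its original value and costs at most $\sum_i 2\,\mathcal H^n(\partial B_i)$ of extra perimeter along the way, which is immaterial since the perimeter bound is only asserted at time $1$. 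The second homotopy then moves the radii $r_i$ — enlarging them toward $\partial B_i$ when $\vol(\Omega)<V_0$, shrinking them when $\vol(\Omega)>V_0$, by an amount apportioned among the $B_i$ according to continuous weights that vanish whenever $B_i$ is essentially entirely inside, or essentially entirely outside, $\Omega$ — until the total volume is exactly $V_0$; the displaced volume is $\lvert\vol(\Omega)-V_0\rvert$, spread among the $B_i$. Because the $B_i$ are fixed and every parameter is a continuous function of the $v_i(\Omega)$, the resulting $\theta\colon\CC(M)\times[0,1]\to\CC(M)$ is jointly $\mathcal F$-continuous, equals the identity on $\CC_{V_0}(M)$, and satisfies $\theta(\Omega,1)\in\CC_{V_0}(M)$; that is, $\theta$ is an $\mathcal F$-deformation retraction onto $\CC_{V_0}(M)$.

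For the perimeter bound I would estimate $\Per(\theta(\Omega,1))$ using the subadditivity $\Per(E\cup F)\le\Per(E)+\Per(F)$, the localized inequality $\Per(\Omega\cap K)\le\Per(\Omega)+\mathcal H^n\big(\Omega^{(1)}\cap\partial^* K\big)$, and the isoperimetric inequality on small geodesic balls. Since the total volume displaced inside $\bigcup_i B_i$ is exactly $\lvert\vol(\Omega)-V_0\rvert$, and the weights are rigged so that no essentially-full or essentially-empty $B_i$ is altered, the newly created pieces of boundary (the spherical caps $\partial B_{r_i}(p_i)$ that were not already carrying part of $\partial\Omega$) have total $\mathcal H^n$-measure controlled by a modulus of continuity of $\lvert\vol(\Omega)-V_0\rvert$ that vanishes at $0$ — of the shape $C\lvert\vol(\Omega)-V_0\rvert^{\,n/(n+1)}$ for small argument and bounded by a fixed constant for large argument. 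Taking $w$ to be a continuous majorant of this modulus with $w(0)=0$ yields $\Per(\theta(\Omega,1))\le\Per(\Omega)+w\big(\lvert\vol(\Omega)-V_0\rvert\big)$, as required. The value of $V_0$ entered only through the single inequality $\sum_i\vol(B_i)>\max(V_0,\vol(M)-V_0)$, which holds for every $V_0\in(0,\vol(M))$.

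The genuine difficulty — and the reason this needs care in \cite{mazurowski2024infinitely} rather than being a one-line remark — is the tension between the two requirements: $\theta$ must be $\mathcal F$-continuous in $\Omega$, which precludes choosing a single ``good'' location at which to correct the volume (there is no continuous selection of such a point over $\CC(M)$), while the \emph{additive} perimeter bound must still degenerate to $0$ as $\lvert\vol(\Omega)-V_0\rvert\to 0$, which forces the corrected region to shrink. Distributing the correction over the fixed family $\{B_i\}$ with continuous weights that vanish on the ``full'' and ``empty'' strata is what reconciles the two; the bookkeeping of these weights, the verification of joint $\mathcal F$-continuity (in particular continuity across the stratum $\CC_{V_0}(M)$, where the correction degenerates), and the perimeter accounting through both homotopies are exactly the points carried out in \cite{mazurowski2024infinitely}, and they go through verbatim after the cosmetic substitution of $V_0$ for $\tfrac12\vol(M)$.
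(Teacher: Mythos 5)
The paper does not prove Lemma~\ref{deformLem}; it states it as a direct citation to Mazurowski--Zhou \cite[Lemma 6.6]{mazurowski2024infinitely} (see also \cite{mazurowski2025half}), with the remark that the passage from $V_0 = \tfrac12\vol(M)$ to a general $V_0 \in (0,\vol(M))$ is a ``straightforward adaptation.'' So there is no in-paper argument for your sketch to be compared against, and you correctly recognize that the real work lives in the cited source.

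As a standalone reconstruction, though, your plan has a concrete internal inconsistency, and it sits exactly at the spot you yourself flag as the ``genuine difficulty.'' You assert that both homotopies ``act as the identity when $\vol(\Omega)=V_0$,'' yet your first homotopy replaces $\Omega\cap B_i$ by the concentric geodesic ball of volume $v_i(\Omega)$ \emph{unconditionally}. These cannot both hold. If the first homotopy performs the full replacement regardless of $\vol(\Omega)$, then $\theta(\cdot,1)$ is not the identity on $\CC_{V_0}(M)$ (so $\theta$ is not a retraction), and $\Per(\theta(\Omega,1)) - \Per(\Omega)$ can be of order $\sum_i \mathcal H^n(\p B_i)$ uniformly in $\Omega$: the slice $\Omega\cap B_i$ may carry tiny perimeter while the concentric ball of equal volume has perimeter comparable to $\mathcal H^n(\p B_i)$, and reshaping the interior of $B_i$ while freezing $\Omega$ on the exterior generically creates a sheet of new boundary along $\p B_i$. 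This defeats the required bound $\Per(\theta(\Omega,1)) \le \Per(\Omega) + w(|\vol(\Omega)-V_0|)$ with $w(0)=0$. If instead you suppress the first homotopy on $\CC_{V_0}(M)$, then $\theta$ jumps in $\mathcal F$ as $\Omega$ crosses the volume-$V_0$ stratum, losing $\mathcal F$-continuity. Either way the construction as written fails; the continuous weights you introduce for the second (radius-adjusting) step must govern the \emph{entire} modification, so that no stage reshapes $\Omega$ inside $\bigcup_i B_i$ by an amount bounded below independently of $|\vol(\Omega)-V_0|$.

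The surrounding scaffolding is sound and does capture the right structure: the packing condition $\sum_i\vol(B_i) > \max\bigl(V_0,\vol(M)-V_0\bigr)$ is precisely what makes the correction always available and is the only place $V_0$ enters; $\mathcal F$-continuity of the local volumes $v_i(\Omega)$ is the correct source of joint continuity; and controlling the newly created boundary by the isoperimetric inequality on small balls, yielding $w(s)\lesssim s^{n/(n+1)}$ for small $s$, is the right shape for the modulus. What is missing is a first stage that degenerates to the identity, together with its perimeter cost, as $|\vol(\Omega)-V_0|\to 0$.
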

%
%
%
\noindent Let $\Xi_k(t) := \theta(\Psi_k(t), 1)$ which is necessarily homotopic in the flat topology to $\Psi_k$ and hence homotopic to $\sigma$ by the properties of $\theta$. Moreover, by definition of $\theta$, $\Xi_k(0) = \Omega_1$, $\Xi_k(1) = \Omega_2$. For each $k$ and any $\eps > 0$, we can apply discretization and interpolation (see e.g. \cite[Thms 1.11, 1.12]{zhou2020multiplicity}) to produce $\mathbf{F}$-continuous maps, $\{\tilde{\Xi}_k\}$, such that 
\begin{align} \nonumber
\sup_{t \in [0,1]} \mathcal{F}(\Xi_k(t), \tilde{\Xi}_k(t)) &< \eps \\ \nonumber
\sup_{t \in [0,1]} \text{Per}(\tilde{\Xi}_k(t)) & < \sup_{t \in [0,1]} \text{Per}(\Xi_k(t)) + \eps \\ \label{boundaryCondition}
\tilde{\Xi}_k(0) = \Omega_1, &\; \tilde{\Xi}_k(1) = \Omega_2
\end{align}
Note that property \ref{boundaryCondition} is not automatically guaranteed by Theorems 1.11 and 1.12 of Zhou \cite{zhou2020multiplicity}. However, one can check that the discretization process of \cite[Thm 1.11]{zhou2020multiplicity} can be modified so that the corresponding discretized maps and homotopies, $\{\phi_i, \psi_i\}$, satisfy $\phi_i(0) = \Omega_1 = \psi_i(0,s)$ for all $s \in [0,1]$ and $\phi_i(1) = \Omega_2 = \psi_i(1,s)$ for all $s \in [0,1]$ - this actually follows by redefining $\phi_i(0), \phi_i(1), \psi_i(0, s), \psi_i(1,s)$ this way and replacing $\delta_i \to 2 \delta_i$. Then the interpolation process of \cite[Thm 1.12]{zhou2020multiplicity} guarantees property \ref{boundaryCondition} and hence $\tilde{\Xi}_k \in \Pi(\sigma)$. \nl 
\indent Now, define 
\[
c_k = \sup_{\alpha \in [0, \sqrt{L_0(\Pi_{V_0})/k}]} w(\alpha)
\]
and note that by continuity of $w$, $\lim_{k \to \infty} c_k = 0$. Then by Lemma \ref{deformLem}, for $k$ sufficiently large, we have that 
\[
\sup_{t \in [0,1]} \Per(\tilde{\Xi}_k(t)) \leq \sup_{t \in [0,1]} \Per(\Xi_k(t)) + \eps \leq L_0(\Pi_{V_0}) - \eta + c_k + \eps \leq L_0(\Pi_{V_0}) - \eta/2
\]
Moreover
\begin{align*}
\sup_{t \in [0,1]} \mathcal{F}(\Xi_k(t), \tilde{\Xi}_k(t)) &< \eps \\
\implies \sup_{t \in [0,1]} f_k(\tilde{\Xi}_k(t)) & < k \eps^2
\end{align*}
because $|\Xi_k(t)| = V_0$ for all $t$. Choosing $\eps$ sufficiently small (which can be done independent of $k$), we conclude that 
\[
\sup_{t \in [0,1]} E_k(\tilde{\Xi}_k(t)) < L_0(\Pi_{V_0}) - \eta/4
\]
This contradicts the definition of $L_0(\Pi_{V_0})$.
\end{proof}
%
\noindent To complete the proof of Theorem \ref{MPassThm}, we consider the volume penalized functionals 
\begin{align*}
F_{k,\eps}&: \CC(M) \to \R \\
F_{k,\eps}(\Omega) &= F^{\eps h, f_k}(\Omega) = \text{Per}(\Omega) - \eps \int_{\Omega} h + k (|\Omega| - |\Omega_1|)^2
\end{align*}
Now let $L^{k,\eps}(\Pi)$ denote the $L$ value of $\Pi(\sigma)$ corresponding to $F_{k,\eps}$ (on the space of Cacciopoli sets with no volume constraint). Applying propositions \ref{limitLVals} and \ref{fixedVolMPIneq}, we have
\begin{align*}
L^{k,\eps}(\Pi) & > L^{k}(\Pi) - \eps ||h||_{L^1(M)} \\
& > L_0(\Pi_0) - \delta - \eps ||h||_{L^1(M)} \\
& \geq L_{\eps}(\Pi_{V_0)} - \delta - 2 \eps ||h||_{L^1(M)} \\
& \geq \max (F_{\eps h}(\Omega_1), F_{\eps h}(\Omega_2)) + \tau/2 \\
& \geq \max (F_{\eps h, f_k}(\Omega_1), F_{\eps h, f_k}(\Omega_2)) + \tau/2
\end{align*}
where $\delta > 0$ was made arbitrarily small by taking $k$ large. 
Applying Theorem \ref{F-min-max}, we conclude the existence of a $\Omega_{\eps, k} \in \CC(M)$ such that $F_{k,\eps}'(\Omega_k) = 0$ and $\Sigma_{k,\eps} = \partial \Omega_{k,\eps}$ is a smooth, almost embedded hypersurface with mean curvature
\[
H_{\Sigma_{k,\eps}} = \eps h \Big|_{\Sigma_{k,\eps}} - 2k(|\Omega_{\eps,k}| - |\Omega_1|)
\]
We now want to take $\eps \to 0$ and $k \to \infty$ to produce a volume constrained critical point of the perimeter functional. However, the argument is exactly as in Mazurowski--Zhou \S 6.5, with our specific choice of homotopy class and corresponding value $L_0(\Pi_0)$ (replacing $\tilde{\omega}_p$ in their work). We remark that one needs a version of \cite[Prop B.1]{mazurowski2024infinitely} for volumes not equal to $\frac{1}{2} \text{Vol}(M)$, however the exact same proof works for any $V_0 \in (0, \text{Vol}(M))$.\qed 

%
%

\bibliography{main}{}

\newcommand{\etalchar}[1]{$^{#1}$}
\providecommand{\bysame}{\leavevmode\hbox to3em{\hrulefill}\thinspace}
\providecommand{\MR}{\relax\ifhmode\unskip\space\fi MR }
\providecommand{\MRhref}[2]{%
  \href{http://www.ams.org/mathscinet-getitem?mr=#1}{#2}
}
\providecommand{\href}[2]{#2}
\begin{thebibliography}{DLR18}

\bibitem[AJ62]{almgren1962homotopy}
Frederick~Justin Almgren~Jr, \emph{The homotopy groups of the integral cycle groups}, Topology \textbf{1} (1962), no.~4, 257--299.

\bibitem[Alm65]{almgren1965theory}
Frederick Almgren, \emph{The theory of varifolds}, Mimeographed notes (1965).

\bibitem[BW20]{bellettini2020inhomogeneous}
Costante Bellettini and Neshan Wickramasekera, \emph{The inhomogeneous allen--cahn equation and the existence of prescribed-mean-curvature hypersurfaces}, arXiv preprint arXiv:2010.05847 (2020).

\bibitem[CES22]{chodosh2022riemannian}
Otis Chodosh, Max Engelstein, and Luca Spolaor, \emph{The riemannian quantitative isoperimetric inequality}, Journal of the European Mathematical Society \textbf{25} (2022), no.~5, 1711--1741.

\bibitem[CL20]{CL}
Gregory~R Chambers and Yevgeny Liokumovich, \emph{Existence of minimal hypersurfaces in complete manifolds of finite volume}, Inventiones mathematicae \textbf{219} (2020), no.~1, 179--217.

\bibitem[CM20]{chodosh2020minimal}
Otis Chodosh and Christos Mantoulidis, \emph{Minimal surfaces and the allen--cahn equation on 3-manifolds: Index, multiplicity, and curvature estimates}, Annals of Mathematics \textbf{191} (2020), no.~1, 213--328.

\bibitem[Dey23]{dey2023existence}
Akashdeep Dey, \emph{Existence of multiple closed cmc hypersurfaces with small mean curvature}, Journal of Differential Geometry \textbf{125} (2023), no.~2, 379--403.

\bibitem[DLR18]{de2018min}
Camillo De~Lellis and Jusuf Ramic, \emph{Min-max theory for minimal hypersurfaces with boundary}, Annales de l'Institut Fourier, vol.~68, 2018, pp.~1909--1986.

\bibitem[Gro06]{gromov2006dimension}
Misha Gromov, \emph{Dimension, non-linear spectra and width}, Geometric Aspects of Functional Analysis: Israel Seminar (GAFA) 1986--87, Springer, 2006, pp.~132--184.

\bibitem[IM18]{inauen2018quantitative}
Dominik Inauen and Andrea Marchese, \emph{Quantitative minimality of strictly stable extremal submanifolds in a flat neighbourhood}, Journal of Functional Analysis \textbf{275} (2018), no.~6, 1532--1550.

\bibitem[IMN18]{irie2018density}
Kei Irie, Fernando~C Marques, and Andr{\'e} Neves, \emph{Density of minimal hypersurfaces for generic metrics}, Annals of Mathematics \textbf{187} (2018), no.~3, 963--972.

\bibitem[Li19]{li2019introduction}
Yangyang Li, \emph{Introduction to almgren-pitts min-max theory}.

\bibitem[Mil25]{milnor2025lectures}
John Milnor, \emph{Lectures on the h-cobordism theorem}, vol. 2258, Princeton university press, 2025.

\bibitem[MN14]{marques2014min}
Fernando~C Marques and Andr{\'e} Neves, \emph{Min-max theory and the willmore conjecture}, Annals of mathematics (2014), 683--782.

\bibitem[MN17]{marques2017existence}
\bysame, \emph{Existence of infinitely many minimal hypersurfaces in positive ricci curvature}, Inventiones mathematicae \textbf{209} (2017), no.~2, 577--616.

\bibitem[MN20]{marques2020applications}
\bysame, \emph{Applications of min--max methods to geometry}, Geometric Analysis: Cetraro, Italy 2018, Springer, 2020, pp.~41--77.

\bibitem[MN21]{marques2021morse}
\bysame, \emph{Morse index of multiplicity one min-max minimal hypersurfaces}, Advances in Mathematics \textbf{378} (2021), 107527.

\bibitem[MR10]{morgan2010stable}
Frank Morgan and Antonio Ros, \emph{Stable constant-mean-curvature hypersurfaces are area minimizing in small l1 neighborhoods}, Interfaces Free Bound \textbf{12} (2010), no.~2, 151--155.

\bibitem[MZ24]{mazurowski2024infinitely}
Liam Mazurowski and Xin Zhou, \emph{Infinitely many half-volume constant mean curvature hypersurfaces via min-max theory}, arXiv preprint arXiv:2405.00595 (2024).

\bibitem[MZ25]{mazurowski2025half}
\bysame, \emph{The half-volume spectrum of a manifold}, Calculus of Variations and Partial Differential Equations \textbf{64} (2025), no.~5, 155.

\bibitem[Pit14]{pitts2014existence}
Jon~T Pitts, \emph{Existence and regularity of minimal surfaces on riemannian manifolds}, vol.~27, Princeton University Press, 2014.

\bibitem[S{\etalchar{+}}84]{simon1984lectures}
Leon Simon et~al., \emph{Lectures on geometric measure theory}, Centre for Mathematical Analysis, Australian National University Canberra, 1984.

\bibitem[Son23]{song2023existence}
Antoine Song, \emph{Existence of infinitely many minimal hypersurfaces in closed manifolds}, Annals of Mathematics \textbf{197} (2023), no.~3, 859--895.

\bibitem[SS81]{schoen1981regularity}
Richard Schoen and Leon Simon, \emph{Regularity of stable minimal hypersurfaces}, Communications on Pure and Applied Mathematics \textbf{34} (1981), no.~6, 741--797.

\bibitem[Ste21]{stern2021existence}
Daniel Stern, \emph{Existence and limiting behavior of min-max solutions of the ginzburg--landau equations on compact manifolds}, Journal of Differential Geometry \textbf{118} (2021), no.~2, 335--371.

\bibitem[WZ23]{wang2023existence}
Zhichao Wang and Xin Zhou, \emph{Existence of four minimal spheres in $s^3$ with a bumpy metric}, arXiv preprint arXiv:2305.08755 (2023).

\bibitem[Zho20]{zhou2020multiplicity}
Xin Zhou, \emph{On the multiplicity one conjecture in min-max theory}, Annals of Mathematics \textbf{192} (2020), no.~3, 767--820.

\bibitem[ZZ18]{zhou2018existence}
Xin Zhou and Jonathan~J Zhu, \emph{Existence of hypersurfaces with prescribed mean curvature i-generic min-max}, arXiv preprint arXiv:1808.03527 (2018).

\end{thebibliography}
\bibliographystyle{amsalpha}
\end{document}